\newtheorem{theorem}{Theorem}
\newtheorem{lemma}[theorem]{Lemma}
\newtheorem{proposition}[theorem]{Proposition}
\theoremstyle{remark}
\newtheorem{remark}[theorem]{Remark}
\newtheorem{example}[theorem]{Example}
 \newcommand{\eps}{\varepsilon}
  \newcommand{\F}{\mathcal{F}}
 \newcommand{\M}{\mathsf{M}}
 \newcommand{\W}{\mathcal{W}}
 \renewcommand{\phi}{\varphi}
\newcommand{\cadlag}{c\`{a}dl\`{a}g}
\newcommand{\E}{\mathbb{E}}
\renewcommand{\P}{\mathbb{P}}
\newcommand{\N}{\mathbb{N}}
\newcommand{\D}{\mathbb{D}}
\newcommand{\R}{\mathbb{R}}
\newcommand{\law}{\mathrm{Law}}
\newcommand{\G}{\mathcal G}
\newcommand{\be}{\begin{equation}}
\newcommand{\ee}{\end{equation}}
\newcommand{\bea}{\begin{eqnarray}}
\newcommand{\bes}{\begin{subequations}}
\newcommand{\ees}{\end{subequations}}
\newcommand{\bgt}{\begin{gather}}
\newcommand{\egt}{\begin{gather}}
\newcommand{\eea}{\end{eqnarray}}
\newcommand{\beaa}{\begin{eqnarray*}}
\newcommand{\eeaa}{\end{eqnarray*}}
\renewcommand{\W}{{\mathbb W}}
\renewcommand{\eps}{\varepsilon}
\renewcommand{\epsilon}{\varepsilon}
\renewcommand{\L}{\mathsf{L}}
\newcommand{\fourIdx}[5]{%
\setbox1=\hbox{\ensuremath{^{#1}}}%
 \setbox2=\hbox{\ensuremath{_{#2}}}%
 \setbox5=\hbox{\ensuremath{#5}}%
 \hspace{\ifnum\wd1>\wd2\wd1\else\wd2\fi}%
 \ensuremath{\copy5^{\hspace{-\wd1}\hspace{-\wd5}#1\hspace{\wd5}#3}%
 _{\hspace{-\wd2}\hspace{-\wd5}#2\hspace{\wd5}#4}%
 }}
\numberwithin{equation}{section}
\numberwithin{theorem}{section}
\renewcommand{\subset}{\subseteq}
\newcommand{\Wa}{\textstyle W}
\begin{document}

\title{Root to Kellerer}

\author{Mathias Beiglb\"ock} \author{Martin
  Huesmann}  \author{Florian Stebegg}  \date{\today}
  \begin{abstract}
We revisit Kellerer's Theorem, that is, we show that for a family of real probability distributions $(\mu_t)_{t\in [0,1]}$ which increases in convex order there exists a Markov martingale $(S_t)_{t\in[0,1]}$ s.t.\ $S_t\sim \mu_t$.  

To establish the result, we observe that the set of martingale measures with given marginals carries a natural compact Polish topology. Based on a particular property of the martingale coupling associated to Root's embedding this allows for a relatively concise proof of Kellerer's theorem. 

We emphasize that many of our arguments are borrowed from Kellerer \cite{Ke72}, Lowther \cite{Lo07}, and Hirsch-Roynette-Profeta-Yor \cite{HiPr11,HiRo12}.


\noindent\emph{Keywords:} Optimal Transport, Kellerer's Theorem, Root embedding. \\
\emph{Mathematics Subject Classification (2010):} Primary 60G42, 60G44; Secondary 91G20.
\end{abstract}
{\let\thefootnote\relax\footnote{ MB and FS  acknowledge support through FWF-projects P26736 and Y782-N25. MH   acknowledges support through  CRC 1060.
We also thank the Hausdorff Research Institute for Mathematics (HIM) for its hospitality in spring 2015 and Nicolas Juillet for many insightful comments. }}
\maketitle

\section{Introduction}

\subsection{Problem and basic concepts}
We consider couplings between probabilities $(\mu_t)_{t\in T}$ on the real line, where $t$ ranges over different choices of time sets $T$. Throughout we assume that all $\mu_t$ have a first moment. We represent these couplings as probabilities (usually denoted by $\pi$ or $\P$) on the canonical space $\Omega$ corresponding to the set of times under consideration. More precisely $\Omega $ may be $\R^T$
or the space $\mathcal D$ of \cadlag\ functions if $T=[0,1]$. In each case we will write $(S_t)$ for the canonical process and $\F=(\F_t)$ for the natural filtration.  $\Pi((\mu_t))$ denotes the set of probabilities $\P$ for which $S_t\sim_\P \mu_t$. $\M((\mu_t))$ will denote the subset of probabilities (``martingale measures'') for which $S$ is a martingale wrt\ $\F$ resp.\ the right-continuous filtration $\F^+=(\F^+_t)_{t\in [0,1]}$ in the case $\Omega=\mathcal D$. To have $\M((\mu_t))\neq \emptyset$
it is \emph{necessary} that $(\mu_t)$ increases in convex order, i.e.\ $\mu_s(\phi)\leq\mu_t(\phi)$ for all convex functions $\phi$ and $s\leq t$. This is an immediate consequence of Jensen's inequality. We denote the convex order by $\preceq.$

Our interest lies in the fact that this condition is also \emph{sufficient}, and we shall from now on assume that $(\mu_t)_{t\in T}$ increases in convex order, i.e.\ that $(\mu_t)_{t\in T}$ is a \emph{peacock} in the terminology of \cite{HiPr11,HiRo12}.  The proof that $\M((\mu_t)_{t\in T})\neq \emptyset$
gets increasingly difficult as we increase the cardinality of the set of times under consideration:

%
%

If  $T= \{1,2\}$, this follows from Strassen's Theorem (\cite{St65}) and we take this result for granted. The case $T= \{1, \ldots, n\}$ immediately follows by composition of one-period martingale measures $\pi_k\in \M(\mu_k,\mu_{k+1})$. 

If $T$ is not finite, the fact that $\M((\mu_t)_{t\in T})\neq \emptyset$  is less immediate  and to establish that  $\M((\mu_t)_{t\in T})$ contains a Markov martingale is harder still; these results were first proved by Kellerer in \cite{Ke72, Ke73} and now go under the name of Kellerer's theorem.  
 We recover these classical results in a framework akin to that of martingale optimal transport. 
 

\subsection{Comparison with Kellerer's approach}


Kellerer \cite{Ke73,Ke72} works with peacocks indexed by a general totally  ordered index set $T$ and the corresponding natural filtration $\F$. He establishes compactness of martingale measures on $\R^T$ which correspond to the peacock $(\mu_t)_{t\in T}$. Then Strassen's theorem allows him to show the existence of a martingale with given marginals $(\mu_t)_{t\in T}$ for general $T$. 

To show that $\M((\mu_t)_{t\in T})$ also contains a Markov martingale is more involved. On a technical level, an obstacle is
that the property of being a Markovian martingale measure is not suitably closed. 
 Kellerer  circumvents this difficulty based on a stronger notion of Markov kernel, the concept of \emph{Lipschitz} or \emph{Lipschitz-Markov kernels} on which all known proofs of Kellerer's Theorem rely. The key step to showing that $\M((\mu_t)_{t\in T})$ contains a Markov martingale is to establish the existence of a two marginal Lipschitz kernel. Kellerer achieves this  by showing that there are Lipschitz-Markov martingale kernels transporting a given distribution $\mu$ to the extremal points of the set ${\mu \preceq \nu}$ and subsequently
obtaining an appealing Choquet-type representation for this set.

Our aim is to give a compact, self contained presentation of Kellerer's result in a framework that can be useful for questions arising in martingale optimal transport\footnote{An early article to study this continuum time version of the  martingale optimal transport problem is the recent article \cite{KaTaTo15} of Kallblad, Tan, and Touzi.} for a continuum of marginals. 
While Kellerer is not interested in continuity properties of the paths of the corresponding martingales, it is favourable to work  in the more traditional setup of martingales with \cadlag\ paths to make sense of typical path-functionals (based on e.g.\ running maximum, quadratic variation, etc.).

 In Theorem \ref{NiceCompact} we  make it a point to show that the space of \emph{\cadlag} martingales corresponding to $(\mu_t)_{t\in [0,1]}$ carries a compact Polish topology. We then note that the Root solution of the Skorokhod problem yields an explicit Lipschitz-Markov kernel, establishing the existence of a Markovian martingale with prescribed marginals.
 
%
%

%

\subsection{Further literature}
 
 Lowther \cite{Lo07,Lo08b} is particularly interested in martingales which have a property even stronger than being Lipschitz Markov: He  shows that there exists a unique almost continuous diffusion martingale  whose marginals fit the given peacock. Under additional conditions on the peacock he is able to show that this martingale has (a.s.) continuous paths.

Hirsch-Roynette-Profeta-Yor \cite{HiPr11,HiRo12} avoid constructing Lipschitz-Markov-kernels explicitly. Rather they establish the link to the works of Gy\"ongy  \cite{Gy86} and Dupire \cite{Du94} on mimicking process / local volatility models,  showing  that Lipschitz-Markov martingales exist for sufficiently regular peacocks. This is extended to general peacocks through approximation arguments. 
On a technical level, their arguments differ from Kellerer's approach in that ultrafilters rather than compactness arguments are used to pass to accumulation points. 
We also recommend \cite{HiRo12} for a more detailed review of existing  results.

\section{The compact set of martingales associated to a peacock}\label{CompactnessSection}

It is well known and in fact a simple consequence of Prohorov's Theorem that $\Pi(\mu_1, \mu_2) $ is compact wrt\  the weak topology induced by the bounded continuous functions (see e.g.\ \cite[Section 4]{Vi09} for details).  It is also straightforward  that  the continuous functions $f:\R^2 \to \R$ which are bounded in the sense that $|f(x,y)|\leq \phi(x)+\psi(y)$ for some $\phi\in L^1(\mu_1), \psi\in L^1(\mu_2)$ induce the same topology on $\Pi(\mu_1, \mu_2)$. 

A transport plan  $\pi\in \Pi(\mu_1, \mu_2)$ is a martingale measure iff for all continuous, compact support functions $h$, $\int h(x)(y-x)\, d\pi=0$. Hence, $\M(\mu_1, \mu_2)$ is a closed subset of $\Pi(\mu_1, \mu_2)$ and thus compact.
    Likewise, $\M(\mu_1,\ldots, \mu_n)$ is compact. 
\subsection{The countable case}  

We fix a countable set $Q\ni 1$ which is dense in $[0,1]$ and write $\M_Q$ for the set of all martingale measures on $\R^Q$. 
 For $D\subseteq Q$ we set:
\[\M_Q((\mu_t)_{t\in D}):=  \{ \P \in\M_Q:  S_t\sim_{\P} \mu_t \mbox{ for $t\in D$} \}.\]
We equip $\R^Q$ with the product topology and consider $\M_Q$ with 
the topology of weak convergence with respect to continuous bounded functions. 
Note that this topology is in  fact induced by the functions $\omega \mapsto f(S_{t_1}(\omega), \ldots,S_{t_n}(\omega))$, where $t_i\in Q$ and $f$ is continuous and bounded. 

\begin{lemma}\label{lem:Mcompact} 
For every finite $D\subseteq Q, D\ni 1$ the set $\M_Q ((\mu_t)_{t\in D})$ is non-empty and compact. 
As a consequence,  $ \M ((\mu_t)_{t\in Q})=\M_Q((\mu_t)_{t\in Q})$  is non-empty and compact.
\end{lemma}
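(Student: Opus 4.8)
The plan is to prove the two assertions in turn, deriving the statement for $Q$ from the finite case by a compactness argument.

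First I would establish non-emptiness and compactness of $\M_Q((\mu_t)_{t\in D})$ for finite $D \subseteq Q$ with $1 \in D$. Write $D = \{t_1 < \dots < t_n\}$ (with $t_n = 1$). Since $(\mu_t)_{t \in D}$ is a peacock, by Strassen's theorem and composition of one-period martingale couplings (as recalled in the introduction) the set $\M(\mu_{t_1}, \dots, \mu_{t_n})$ of martingale measures on $\R^n$ with these marginals is non-empty and compact. I would then push such a measure forward to $\R^Q$ by disintegrating and inserting, at each coordinate $t \in Q \setminus D$, an independent copy of $S_{t'}$ where $t'$ is the largest element of $D$ below $t$ (equivalently, defining $S_t := S_{t'}$, which keeps the martingale property with respect to the natural filtration on $\R^Q$); this produces a point of $\M_Q((\mu_t)_{t\in D})$, giving non-emptiness. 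For compactness, note $\M_Q((\mu_t)_{t \in D})$ is a subset of $\prod_{t \in Q}\mu_t$-supported measures — more precisely, marginal tightness at each coordinate plus Prohorov on the product $\R^Q$ gives relative compactness — and the defining constraints (the finitely many marginal constraints $S_t \sim \mu_t$ for $t \in D$, and the martingale constraints $\int h(S_{s_1},\dots,S_{s_k})(S_{u} - S_{v})\,d\P = 0$ for $v \le u$ in $Q$, $h$ continuous bounded, $s_i \le v$) are all closed under weak convergence with respect to continuous bounded cylinder functions. Hence $\M_Q((\mu_t)_{t\in D})$ is closed inside a compact set, so compact.

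Next, for the second assertion, observe that $\M_Q((\mu_t)_{t \in Q}) = \bigcap_{D} \M_Q((\mu_t)_{t \in D})$, the intersection over all finite $D \subseteq Q$ containing $1$. Each set in this intersection is non-empty and compact by the first part, and the family is downward directed: for finite $D_1, D_2$ we have $\M_Q((\mu_t)_{t\in D_1 \cup D_2}) \subseteq \M_Q((\mu_t)_{t\in D_1}) \cap \M_Q((\mu_t)_{t\in D_2})$, and the left side is non-empty. By the finite intersection property for compact sets, the full intersection $\M_Q((\mu_t)_{t\in Q})$ is non-empty; it is compact as an intersection of closed subsets of a compact space. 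Finally I would remark that $\M((\mu_t)_{t\in Q}) = \M_Q((\mu_t)_{t\in Q})$ because on $\R^Q$ the natural filtration is generated by the coordinates and being a martingale means exactly satisfying the martingale constraints listed above.

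The main obstacle I anticipate is the relative compactness of $\M_Q$ (or of $\M_Q((\mu_t)_{t\in D})$) inside the space of probability measures on $\R^Q$: one must check tightness coordinatewise — which follows since along any $\P$ in the set, $S_t$ has law controlled by $\mu_t$ (or is tight because $(\mu_t)$ is a fixed family with first moments, so each $\mu_t$ is tight) — and then invoke that a product of tight families on a countable product of Polish spaces is tight, hence Prohorov applies. Care is needed that weak convergence for bounded continuous cylinder functions on $\R^Q$ is genuinely the weak topology of $\R^Q$ with the product topology, which is the content of the remark preceding the lemma, and that the marginal and martingale constraints really are preserved in the limit — the martingale constraint with unbounded increment $S_u - S_v$ requires a uniform integrability input, which I would supply by noting that the marginals $\mu_v, \mu_u$ are fixed and testing against $h$ times a truncation of $(S_u - S_v)$, then removing the truncation using the first-moment bound uniform over the set.
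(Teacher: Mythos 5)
Your overall architecture --- the finite--$D$ case via Strassen plus closedness inside a compact set, followed by the finite intersection property --- is the same as the paper's. The genuine gap is at the one nontrivial analytic step: coordinatewise \emph{uniform} tightness (and the uniform integrability needed to close the martingale constraints) at the coordinates $t\in Q\setminus D$. For finite $D$, the law of $S_t$ under $\P\in\M_Q((\mu_t)_{t\in D})$ at a time $t\notin D$ is \emph{not} $\mu_t$, and it need not be dominated in convex order by $\mu_t$ either; so your two justifications (``$S_t$ has law controlled by $\mu_t$'', respectively ``the marginals $\mu_u,\mu_v$ are fixed'' in the truncation/UI step for the constraint $\int h(S_{s_1},\dots,S_{s_k})(S_u-S_v)\,d\P=0$) fail exactly where they are needed, namely for times outside $D$. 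What is true, and what the paper uses, is that since $1\in D$ is the maximal time of $Q$ and $S$ is a martingale, Jensen gives $\law_\P(S_t)\preceq\mu_1$ for \emph{every} $t\in Q$, and the whole family $\{\mu:\mu\preceq\mu_1\}$ is uniformly tight because
$\mu(\R\setminus[-(n+1),n+1])\le\int(|x|-n)_+\,d\mu\le\int(|x|-n)_+\,d\mu_1\to 0$.
This single estimate supplies both the tightness needed for Prohorov on the countable product (via Tychonoff boxes $K_r=\{g:|g|\le r\}$) and the uniform integrability needed to pass the martingale constraints to the limit for arbitrary $u,v\in Q$. It is also the only place where the hypothesis $D\ni 1$ enters; your proposal never invokes it, which is a symptom of the missing step.

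Two smaller points: inserting ``an independent copy of $S_{t'}$'' at the coordinates $t\notin D$ would destroy the martingale property --- only the deterministic choice $S_t:=S_{t'}$ from your parenthetical works --- and you should also say what to do for $t$ below $\min D$ (e.g.\ set $S_t:=S_{\min D}$, which is still a martingale for the natural filtration). These are minor compared with the tightness issue; the paper itself leaves the non-emptiness extension implicit.
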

  
\begin{proof}  
We first show that $\M_Q(\mu_1)$ is compact.
To this end, we note that for every $\eps >0$ there exists $n$ such that $\int (|x|-n)_+\, d\mu_1 < \eps$. We then also have $$\textstyle \mu(\R\setminus [-(n+1), (n+1)])\leq \int (|x|-n)_+\, d\mu\leq \int (|x|-n)_+\, d\mu_1 < \eps$$ for every $\mu\preceq\mu_1$.

For every $r:Q\to \R_+$ the set $K_r:=\{g:Q\to\R, |g|\leq r\}$ is compact by 
Tychonoff's theorem. Also, for given $\eps>0$ there exists $r$ such that for all $\P$ on $\R^Q$ with $\law_\P (S_t)\preceq \mu_1$ for all $t\in Q$ we have $\P(K_r)>1-\eps$. 
Hence Prohoroff's Theorem implies that 
$\M_Q(\mu_1)$ is compact. 

Next observe that for any finite set $D\subset Q, 1\in D$ the set $\M_Q((\mu_t)_{t\in D})$ is non empty by Strassen's theorem. 
Clearly $\M_Q((\mu_t)_{t\in D})$ is also closed and hence compact. 
The family of all such sets $\M_Q((\mu_t)_{t\in D})$ has the finite intersection property, hence by compactness
$$\qquad\M_Q((\mu_t)_{t\in Q})=\textstyle \bigcap_{D\subseteq Q, 1\in D, |D|< \infty} M_Q((\mu_t)_{t\in D})\neq \emptyset. \qquad \qedhere$$ 
\end{proof}



 \subsection{The right-continuous case} 
  We will now extend this construction to right-continuous  families of marginals on the whole interval $[0,1]$.
 
 We first note that it is not necessary to distinguish between the terms right-continuous and \cadlag\ in this context: fix a (not necessarily countable) set $Q\subseteq [0,1], Q\ni 1$,   a peacock $(\mu_t)_{t\in Q}$ and a strictly convex function $\phi$ which grows at most linearly, e.g.\ $\phi(x)=\sqrt{1+x^2}$. Then the following is  straightforward:  
 the mapping  $\mu_{\cdot}:Q\to P(\R), q \mapsto \mu_q $  is \cadlag\ wrt\
the weak topology  on $P(\R)$ iff the increasing function $q\mapsto \int \phi\, d\mu_q$ is right-continuous. In this case we say that $(\mu_t)_{t\in Q}$ is a right-continuous peacock.

  As we have to deal with right limits we will recall the following:
 
\begin{lemma}\label{lem:backmart}
Let $( X_n)_{n\in -\mathbb{N} \cup \{-\infty\} }$  
be a martingale wrt $ (\G_n)_{n\in-\N \cup\{-\infty\}}$ and write 
$\mu_n = \law(X_n)$. If $\lim_{n \to -\infty}  \mu_n  =  \mu_{-\infty}$, 
then  $X _{-\infty}  =\lim  X_n$ a.s.\ and in $L_1$.
\end{lemma}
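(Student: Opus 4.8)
The plan is to recognize this as essentially the backward martingale convergence theorem combined with an $L^1$-convergence upgrade coming from the convergence of marginals. First I would invoke the classical backward (reversed) martingale convergence theorem: since $(X_n)_{n \in -\N}$ is a martingale indexed by the negatively directed set $-\N$, the family $\{X_n : n \in -\N\}$ is uniformly integrable (each $X_n = \E[X_{-1} \mid \G_n]$, and conditional expectations of a fixed integrable random variable form a uniformly integrable family), and therefore $X_n$ converges both almost surely and in $L^1$ as $n \to -\infty$ to some limit $X_{-\infty}' := \lim_n X_n$, which is $\G_{-\infty}$-measurable where $\G_{-\infty} = \bigcap_n \G_n$. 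This is the standard result and I would simply cite it.

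The remaining point is to identify this limit $X_{-\infty}'$ with the given random variable $X_{-\infty}$. Here the hypothesis $\law(X_n) = \mu_n \to \mu_{-\infty} = \law(X_{-\infty})$ enters. From the $L^1$ (hence in-law) convergence just established, $\law(X_n) \to \law(X_{-\infty}')$, so $\law(X_{-\infty}') = \mu_{-\infty} = \law(X_{-\infty})$; thus the limit has the right distribution. To get genuine a.s.\ and $L^1$ identification of $X_{-\infty}'$ with $X_{-\infty}$, I would use the martingale property linking level $-\infty$ to the others: by assumption $(X_n)_{n \in -\N \cup \{-\infty\}}$ is a martingale with respect to $(\G_n)_{n \in -\N \cup \{-\infty\}}$, so $X_{-\infty} = \E[X_n \mid \G_{-\infty}]$ for every $n \in -\N$. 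Letting $n \to -\infty$ and using $L^1$-convergence of $X_n$ to $X_{-\infty}'$ together with the conditional-expectation contraction in $L^1$, we get $X_{-\infty} = \E[X_{-\infty}' \mid \G_{-\infty}] = X_{-\infty}'$ since $X_{-\infty}'$ is already $\G_{-\infty}$-measurable. Hence $X_{-\infty} = \lim_n X_n$ a.s.\ and in $L^1$.

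I expect the only subtlety — and the main thing to get right — is the precise meaning of the indexing ``$(X_n)_{n \in -\N \cup \{-\infty\}}$ is a martingale'': one must read this as asserting $X_{-\infty} = \E[X_n \mid \G_{-\infty}]$ for all finite $n$, which is exactly what makes the identification of limits work without any extra hypotheses. If instead one only knew $(X_n)_{n \in -\N}$ was a martingale plus the marginal convergence, one would need an additional argument (e.g.\ that $\G_{-\infty}$ supports $X_{-\infty}$), so I would be careful to state explicitly that we use the full martingale hypothesis including the $-\infty$ coordinate. Everything else is a direct appeal to standard reversed-martingale theory and the $L^1$-continuity of conditional expectation, so no lengthy computation is needed.
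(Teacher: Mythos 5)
Your first step (backward martingale convergence: $X_n\to Y:=\lim_{n\to-\infty}X_n$ a.s.\ and in $L^1$, by uniform integrability of $X_n=\E[X_{-1}\mid\G_n]$) is fine, and so is the derivation $X_{-\infty}=\E[X_n\mid\G_{-\infty}]\to\E[Y\mid\G_{-\infty}]$, i.e.\ $X_{-\infty}=\E[Y\mid\G_{-\infty}]$. The gap is the final claim that $\E[Y\mid\G_{-\infty}]=Y$ ``since $Y$ is already $\G_{-\infty}$-measurable''. The limit $Y$ is measurable with respect to $\bigcap_{n\in-\N}\G_n$, but the lemma does not assume $\G_{-\infty}=\bigcap_n\G_n$; it only assumes $(\G_n)_{n\in-\N\cup\{-\infty\}}$ is a filtration, so $\G_{-\infty}$ may be strictly smaller. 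This is exactly the situation in which the lemma is applied in Proposition \ref{OnDQ}: there $\G_{-\infty}=\F_q$, $\G_n=\F_{t_n}$ with $t_n\downarrow q$, and the limit $\lim_{t\downarrow q}S_t$ is a priori only measurable with respect to $\bigcap_{t>q}\F_t\supseteq\F_q$, which can be strictly larger; proving that the limit is in fact (a.s.\ equal to) the $\F_q$-measurable variable $S_q$ is the whole point. A symptom of the misreading is that your argument never genuinely uses the hypothesis $\lim\mu_n=\mu_{-\infty}$ (you even say the identification works ``without any extra hypotheses''), whereas without it the statement fails under the correct reading: take $Z=\pm1$ with probability $\tfrac12$, $X_n=Z$, $\G_n=\sigma(Z)$ for $n\in-\N$, $X_{-\infty}=0$ and $\G_{-\infty}$ trivial; this is a martingale indexed by $-\N\cup\{-\infty\}$, yet $\lim X_n=Z\neq X_{-\infty}$.

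The missing ingredient, which is the paper's route, is to combine the two facts you already have: $X_{-\infty}=\E[Y\mid\G_{-\infty}]$ and $\law(Y)=\lim_n\mu_n=\mu_{-\infty}=\law(X_{-\infty})$. A conditional expectation that preserves the law must be the identity: taking a strictly convex $\phi$ of linear growth (e.g.\ $\phi(x)=\sqrt{1+x^2}$), conditional Jensen gives $\E[\phi(X_{-\infty})]=\E[\phi(\E[Y\mid\G_{-\infty}])]\leq\E[\phi(Y)]=\E[\phi(X_{-\infty})]$, and equality in the strict Jensen inequality forces $Y=\E[Y\mid\G_{-\infty}]=X_{-\infty}$ a.s.\ (and then in $L^1$ by the convergence already established). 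This is precisely the paper's two-line argument: $Y$ has the same law as $X_{-\infty}$ and satisfies $\E[Y\mid X_{-\infty}]=X_{-\infty}$, which together imply $Y=X_{-\infty}$. So your proposal assembles the right pieces but replaces the one step where the law-convergence hypothesis must do its work by an unjustified measurability assertion.
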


\begin{proof}
Set $Y := \lim_{n\to -\infty} X_n$  which exists (see for instance 
\cite[Theorem II.2. 	3]{ReYo99}), has the same law as  
$X_{-\infty }$ and satisfies $\E [Y| X_{-\infty} ]=  X_{-\infty}$ .
This clearly implies that  $X_{-\infty}  =  Y$.
\end{proof}    
As above,  we fix a countable and dense set $Q\subset [0,1]$ with $1\in Q$ and consider     
\begin{align*} 
\mathcal D &=  \{ g  :[0,1]  \to \R : g  \mbox{ is  \cadlag\ } \} ,\\
\D_Q  &=  \{ f: Q  \to \R : \exists g\in \mathcal D \,  \text{ s.t.\ } g_{|Q}  =  f \}.
\end{align*}
Note that  $\D_Q$  is a Borel subset of  $\R^Q$. Indeed a useful explicit description
of  $\D_Q$  can be given in terms of upcrossings. For  $f: Q  \to  \R$  we write 
$UP (f, [a,b ])$ for
the number of upcrossings of  $f$  through the interval $[ a,b ]$. Then  $f  \in   \D_Q$  iff $  f$
is \cadlag\ and bounded on $Q$  and satisfies  $UP (f, [a,b ]) < \infty $ for arbitrary $a < b$ 
(clearly it is enough to take $a,b \in Q$).
We also set \begin{align}\label{RCFilt} \bar\F_s  := \textstyle \bigcap_{t\in Q, t>s}   \F_t\end{align}
for $s \in [0,1)$ and let  $\bar\F_1  =  \F_1$.

\begin{proposition}\label{OnDQ}  
Assume that  $(\mu_t)_{t\in Q}$ is a right-continuous peacock and let $\P \in \M((\mu_t)_{t\in Q})$. Then $\P(\D_Q) = 1$. For $ q\in Q$, 
$\bar S_q:=S_q=\lim_{t\downarrow q, t\in Q, t>q} S_t$ holds $\P$-a.s. For  $s \in  [0 ,1]\setminus Q$,
$\lim_{t\downarrow s, t\in Q, t>s} S_t$ exists and we define it to be $\bar S_s$. 
The thus defined process $( \bar S_t)_{t \in [0,1]}$ is a \cadlag\ martingale wrt\ $(\bar\F_t) _{t\in [0,1]}$. 
\end{proposition}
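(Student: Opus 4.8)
The plan is to establish $\P(\D_Q)=1$ by checking, $\P$-almost surely, the three properties in the explicit description of $\D_Q$ recalled above --- boundedness on $Q$, finiteness of all upcrossing counts, and right-continuity at the points of $Q$ --- and then to extend the path to $[0,1]$ and verify the martingale property against $(\bar\F_t)_{t\in[0,1]}$. I would begin with the two estimates coming from Doob's inequalities applied to finite subfamilies. For any finite $D\subseteq Q$ with $1\in D$ the process $(S_t)_{t\in D}$ is a discrete-time martingale with last element $S_1$, and $\E|S_1|=\int|x|\,d\mu_1<\infty$; Doob's maximal inequality gives $\P(\max_{t\in D}|S_t|>\lambda)\le\E|S_1|/\lambda$, and Doob's upcrossing inequality gives $\E\,UP((S_t)_{t\in D},[a,b])\le c(a,b)$ with $c(a,b)<\infty$ not depending on $D$. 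Running $D$ through an increasing sequence of such finite sets with union $Q$ and using monotone convergence, one obtains $\P(\sup_{t\in Q}|S_t|>\lambda)\le\E|S_1|/\lambda$ and $\E\,UP((S_t)_{t\in Q},[a,b])\le c(a,b)$; intersecting over rational $a<b$ produces an event $\Omega_0$ with $\P(\Omega_0)=1$ on which $(S_t)_{t\in Q}$ is bounded and has finitely many upcrossings of every interval. On $\Omega_0$ a standard real-analysis argument gives that $\bar S_s:=\lim_{t\downarrow s,\,t\in Q}S_t$ exists in $\R$ for every $s\in[0,1)$ and that $\lim_{t\uparrow s,\,t\in Q}S_t$ exists for every $s\in(0,1]$: if one of these one-sided limits failed to exist, its $\liminf$ and $\limsup$ would be distinct and any rational interval strictly between them would be crossed infinitely often arbitrarily close to $s$, contradicting finiteness of $UP$.

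The step I expect to be the main obstacle is to upgrade the mere existence of $\bar S_q$ at a point $q\in Q$ to the identity $S_q=\bar S_q$, i.e.\ right-continuity of the path at the points of $Q$: this is not pathwise, and it is here that the right-continuity of the peacock is needed. I would fix $q\in Q$, pick a strictly decreasing sequence $q_n\downarrow q$ in $Q$, and note that $X_{-n}:=S_{q_n}$, $X_{-\infty}:=S_q$ defines a martingale indexed by $-\N\cup\{-\infty\}$ for the filtration $\G_{-n}:=\F_{q_n}$, $\G_{-\infty}:=\F_q$, with $\law(X_{-n})=\mu_{q_n}\to\mu_q=\law(X_{-\infty})$ by right-continuity of $(\mu_t)_{t\in Q}$. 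Lemma~\ref{lem:backmart} then yields $S_q=\lim_n S_{q_n}$ $\P$-a.s.\ and in $L^1$, and on $\Omega_0$ this limit is $\bar S_q$. Intersecting the resulting null sets over the countably many $q\in Q$, we get $S_q=\bar S_q$ for all $q\in Q$ simultaneously, a.s. Together with boundedness and finiteness of the upcrossings this shows, via the description of $\D_Q$, that $\P(\D_Q)=1$, along with $\bar S_q=S_q$ for $q\in Q$ and the existence of $\bar S_s=\lim_{t\downarrow s,\,t\in Q}S_t$ for $s\notin Q$.

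It remains to check that $(\bar S_t)_{t\in[0,1]}$, with $\bar S_s=\lim_{t\downarrow s,\,t\in Q}S_t$ for $s<1$ and $\bar S_1=S_1$, is a \cadlag\ $(\bar\F_t)$-martingale. That $\bar S$ is \cadlag\ on all of $[0,1]$ is the standard fact that $s\mapsto\lim_{t\downarrow s,\,t\in Q}f(t)$ is a \cadlag\ function whenever $f:Q\to\R$ has finite one-sided limits everywhere along the dense set $Q$. For adaptedness, with $q_n\downarrow s$ strictly in $Q$ the variable $\bar S_s=\lim_n S_{q_n}$ is $\F_{q_m}$-measurable for every $m$, hence measurable with respect to $\bigcap_m\F_{q_m}=\bar\F_s$ (and $\bar S_1=S_1$ is $\bar\F_1$-measurable); also $\bar S_t\in L^1$ since $\E|\bar S_t|\le\liminf_n\E|S_{q_n}|\le\E|S_1|$. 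For the martingale identity with $s<t$ (the case $s=t$ being trivial) I would take strictly decreasing $q_n\downarrow s$, $r_n\downarrow t$ in $Q$ with $q_n\le r_n$; backward martingale convergence gives $S_{q_n}\to\bar S_s$ and $S_{r_n}\to\bar S_t$ a.s.\ and in $L^1$, and passing to the limit in $\E[S_{r_n}\mid\F_{q_n}]=S_{q_n}$ --- using that the $L^1$-norm of $\E[S_{r_n}-\bar S_t\mid\F_{q_n}]$ tends to $0$ and that $\E[\bar S_t\mid\F_{q_n}]\to\E[\bar S_t\mid\bar\F_s]$ by L\'evy's downward theorem, since $\bigcap_n\F_{q_n}=\bar\F_s$ --- yields $\E[\bar S_t\mid\bar\F_s]=\bar S_s$. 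Apart from the identification $S_q=\bar S_q$ flagged above, every step is routine martingale theory, which I would keep brief.
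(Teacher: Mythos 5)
Your proof is correct and follows essentially the same route as the paper: Lemma \ref{lem:backmart} together with right-continuity of the peacock gives $S_q=\lim_{t\downarrow q,t\in Q}S_t$ at points of $Q$, Doob's maximal and upcrossing inequalities give boundedness and finitely many upcrossings (hence $\P(\D_Q)=1$ and existence of right limits), and the path is then extended right-continuously to $[0,1]$. The only difference is that you spell out what the paper dismisses as ``standard martingale folklore'' — the passage to the filtration $(\bar\F_t)$ via backward/downward martingale convergence — and this verification is sound (modulo the trivial boundary case $t=1$, where one simply takes $r_n\equiv 1$ instead of a strictly decreasing sequence).
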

 
\begin{proof} 
 By Lemma \ref{lem:backmart},  $S_q  =\lim_{t\downarrow q,t>q,t\in Q} S_t$  for all $ q  \in  Q$.
 Using standard martingale folklore (cf. \cite[Theorem 2.8]{ReYo99}), this implies that 
 $(S_t)_{t\in Q}$  is a martingale under $\pi$
 wrt\ $(\bar\F_t)_{t\in Q}$ as well and the paths of $(S_t)_{t\in Q}$ are almost surely \cadlag. 
 Moreover these are almost surely bounded by Doob's maximal inequality and have 
 only finitely many upcrossings by Doob's upcrossing inequality. This proves 
 $\P(\D_Q )=1$.  As the paths of $(S_t)_{t\in Q} $ are \cadlag\, the definition
 $\bar S_s:=\lim_{t\downarrow s, t\in Q, t>s}S_t$ is well for  
 $s \in [0, 1] \setminus Q$ and  $(\bar S_t )_{t\in[0,1]}$ 
 is a \cadlag\ martingale under $\P$ wrt\ $(\bar\F_t)_{t\in [0,1]}$.  
\end{proof}


 Identifying elements of $\mathcal D$ and $\D_Q$,  the right-continuous filtration $\F^+$ on $\mathcal D$ equals  the restriction of   $\bar \F$ (cf.\ \eqref{RCFilt}) to $\D_Q$. Since any martingale  measure $\P$  
concentrated on $\D_Q$ corresponds to a martingale measure $\widetilde \P$ on 
$\mathcal D$ Proposition \ref{OnDQ}  yields: 
\begin{proposition}\label{Q2I} Let $(\mu_t)_{t\in [0,1]}$ be a right-continuous peacock and $Q\ni 1, Q\subseteq [0,1]$ a countable dense set. Then the above correspondence 
\begin{align}\label{QtoI} \P\mapsto \widetilde \P\end{align}
 constitutes a bijection between $\M((\mu_t)_{t\in Q})$ and $\M((\mu_t)_{t\in [0,1]})$.
\end{proposition}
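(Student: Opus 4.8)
The plan is to verify that the map $\P \mapsto \widetilde\P$ defined via Proposition \ref{OnDQ} is well-defined, injective, and surjective. First I would make the correspondence precise: given $\P \in \M((\mu_t)_{t\in Q})$, Proposition \ref{OnDQ} shows $\P$ is concentrated on $\D_Q$ and that the process $(\bar S_t)_{t\in[0,1]}$ obtained by taking right limits along $Q$ is a \cadlag\ martingale with respect to $(\bar\F_t)_{t\in[0,1]}$. Identifying $\D_Q$ with $\mathcal D$ through restriction of paths (which, as noted after Proposition \ref{OnDQ}, identifies $\bar\F$ restricted to $\D_Q$ with $\F^+$ on $\mathcal D$), the pushforward of $\P$ under this identification is a measure $\widetilde\P$ on $\mathcal D$ under which the canonical process is a martingale for $\F^+$; so $\widetilde\P \in \M((\mu_t)_{t\in[0,1]})$. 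One must check the marginals: for $q \in Q$ we have $\bar S_q = S_q \sim \mu_q$ by Proposition \ref{OnDQ}, and for $s \in [0,1]\setminus Q$ the right-continuity of the peacock together with $\bar S_s = \lim_{t\downarrow s, t\in Q} S_t$ (an $L^1$-limit by Lemma \ref{lem:backmart}, or by the martingale property and right-continuity of $t\mapsto\int\phi\,d\mu_t$) gives $\bar S_s \sim \mu_s$. Hence the map is well-defined into $\M((\mu_t)_{t\in[0,1]})$.

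For injectivity, note that $\widetilde\P$ determines, for every $q\in Q$, the law of $S_q$ under the identification, and since $Q$ determines the path restriction map $\mathcal D \to \D_Q$, the finite-dimensional distributions of $\P$ along $Q$ are recovered from those of $\widetilde\P$; as $\P$ lives on $\R^Q$, it is determined by these. For surjectivity, start from any $\widetilde\P \in \M((\mu_t)_{t\in[0,1]})$; its canonical \cadlag\ martingale, restricted to coordinates in $Q$, yields a measure $\P$ on $\R^Q$ concentrated on $\D_Q$. Restriction of a martingale to a subset of times is again a martingale (for the sub-filtration generated by those times, a fortiori for $\bar\F$), so $\P \in \M((\mu_t)_{t\in Q})$; and applying the $\P\mapsto\widetilde\P$ construction to this $\P$ recovers $\widetilde\P$ because a \cadlag\ path is determined by its values on the dense set $Q$. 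This shows the two constructions are mutual inverses.

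The main obstacle, I expect, is bookkeeping around the two filtrations rather than any deep difficulty: one must be careful that the martingale property for $\P$ is stated with respect to $\bar\F$ (the right-continuous enlargement along $Q$) and not merely the raw natural filtration on $\R^Q$, so that it matches $\F^+$ on $\mathcal D$ — but this is exactly what Proposition \ref{OnDQ} delivers, and the remark immediately preceding the statement records the needed identification of filtrations. A second minor point is the equality $\M((\mu_t)_{t\in Q}) = \M_Q((\mu_t)_{t\in Q})$ from Lemma \ref{lem:Mcompact}, which lets us regard elements of $\M((\mu_t)_{t\in Q})$ as measures on $\R^Q$ in the first place; with that identification in hand the argument is essentially a change-of-variables along a Borel isomorphism between $\D_Q$ and $\mathcal D$ that respects the relevant $\sigma$-algebras.
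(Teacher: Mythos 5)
Your proposal is correct and follows essentially the same route as the paper, which derives the bijection directly from Proposition \ref{OnDQ} together with the identification of $\D_Q$ with $\mathcal D$ (and of $\bar\F$ with $\F^+$). You simply make explicit the details the paper leaves implicit --- the marginal $\bar S_s\sim\mu_s$ at $s\notin Q$ via right-continuity of the peacock and a.s.\ convergence, and the check that restriction to $Q$ and right-limit extension are mutually inverse --- all of which are carried out correctly.
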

Through the identification $\P \mapsto \widetilde \P$, 
the set $\M((\mu_t)_{t\in [0,1]})$ carries a compact topology. Superficially, this topology seems to depend on the particular choice of the set $Q$ but in fact this is not the case: indeed given $Q, Q'$ the set $Q\cup Q'$ gives rise to a topology which is a priori finer than the ones corresponding to $Q$ resp.\ $Q'$. But as all involved topologies are compact, they are in fact equal. Hence we obtain:
\begin{theorem}\label{NiceCompact} Let $(\mu_t)_{t\in [0,1]}$ be  a right-continuous peacock and consider the canonical process $(S_t)_{t\in [0,1]}$ on the Skorokhod space $\mathcal D$.
The set $\M((\mu_t)_{t\in [0,1]}) $ of martingale measures with marginals $(\mu_t)$  is non empty and compact wrt\  the topology induced by the functions $$\omega \mapsto f(S_{t_1}(\omega), \ldots, S_{t_n}(\omega)),$$ 
where $t_1, \ldots, t_n\in [0,1]$ and $f$ is continuous and bounded.
\end{theorem}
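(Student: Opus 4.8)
The plan is to deduce Theorem \ref{NiceCompact} directly from the machinery already assembled in Section \ref{CompactnessSection}, treating the statement essentially as a repackaging of Lemma \ref{lem:Mcompact}, Proposition \ref{OnDQ}, and Proposition \ref{Q2I}, together with the topological remark that precedes it. First I would fix once and for all a countable dense set $Q\subseteq[0,1]$ with $1\in Q$. By Lemma \ref{lem:Mcompact}, $\M((\mu_t)_{t\in Q})=\M_Q((\mu_t)_{t\in Q})$ is nonempty and compact in the weak topology on $\R^Q$ induced by the bounded continuous cylinder functions $\omega\mapsto f(S_{t_1}(\omega),\ldots,S_{t_n}(\omega))$ with $t_i\in Q$. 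By Proposition \ref{Q2I}, the map $\P\mapsto\widetilde\P$ is a bijection between $\M((\mu_t)_{t\in Q})$ and $\M((\mu_t)_{t\in[0,1]})$, so the latter set is nonempty; transporting the compact topology along this bijection equips $\M((\mu_t)_{t\in[0,1]})$ with a compact topology, and by Proposition \ref{OnDQ} every such measure genuinely lives on the Skorokhod space $\mathcal D$, so this is a statement about \cadlag\ martingale measures as claimed.

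The next step is to identify the transported topology with the topology on $\mathcal D$ induced by \emph{all} bounded continuous cylinder functions $\omega\mapsto f(S_{t_1}(\omega),\ldots,S_{t_n}(\omega))$ with $t_1,\ldots,t_n\in[0,1]$ — not merely those with times in $Q$. One inclusion is immediate: the $Q$-cylinder functions are a subfamily of the $[0,1]$-cylinder functions, so the $[0,1]$-topology is a priori finer than the transported $Q$-topology. For the reverse, I would invoke the same device used in the paragraph before the theorem: given any finite set of times $t_1,\ldots,t_n\in[0,1]$, enlarge $Q$ to $Q'=Q\cup\{t_1,\ldots,t_n\}$, which is still countable and dense with $1\in Q'$. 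Lemma \ref{lem:Mcompact} and Proposition \ref{Q2I} apply verbatim to $Q'$, so the $Q'$-topology is again compact; it is a priori finer than the $Q$-topology (more test functions), and finer than (indeed contains) the topology generated by the single additional cylinder function $f(S_{t_1},\ldots,S_{t_n})$. Two comparable compact Hausdorff topologies on the same set coincide, so the $Q$- and $Q'$-topologies agree, and in particular the function $f(S_{t_1},\ldots,S_{t_n})$ is continuous for the $Q$-topology. Since the finite set $t_1,\ldots,t_n\in[0,1]$ and the bounded continuous $f$ were arbitrary, the $[0,1]$-cylinder topology is coarser than the $Q$-topology, hence equal to it. This shows the topology in the statement is well defined (independent of $Q$) and compact.

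Finally I would address continuity of $\P\mapsto\widetilde\P$ and the Borel/measurability bookkeeping needed to make the identification of $\mathcal D$ with $\D_Q$ rigorous: $\D_Q$ is a Borel subset of $\R^Q$ (via the explicit upcrossing description given in the excerpt), restriction $g\mapsto g_{|Q}$ is a Borel bijection $\mathcal D\to\D_Q$ intertwining the canonical processes, and a measure on $\mathcal D$ and its pushforward on $\D_Q$ assign the same values to cylinder functions, so the identification is topology-preserving on the relevant function classes. I expect the main obstacle — really the only nontrivial point, since nonemptiness and compactness are handed to us — to be this verification that nothing is lost in passing between $\R^Q$ and $\mathcal D$, i.e.\ that evaluating at times outside $Q$ does not introduce genuinely new continuous functionals: this is exactly where the \cadlag\ structure is essential, because for $s\notin Q$ the coordinate $\bar S_s=\lim_{t\downarrow s,\,t\in Q}S_t$ is not continuous but only a pointwise limit of continuous functions on $\D_Q$, so one must argue at the level of the compact topologies (two comparable compact Hausdorff topologies coincide) rather than by a naive density argument. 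Everything else is a direct assembly of the previously established results.
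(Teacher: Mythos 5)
Your proposal is correct and follows essentially the same route as the paper: transport the compactness of $\M((\mu_t)_{t\in Q})$ from Lemma \ref{lem:Mcompact} through the bijection of Proposition \ref{Q2I}, then use the fact that enlarging $Q$ yields a comparable compact (Hausdorff) topology, which must therefore coincide, so that cylinder functions at arbitrary times $t_1,\ldots,t_n\in[0,1]$ are continuous. Your explicit choice $Q'=Q\cup\{t_1,\ldots,t_n\}$ and the remark on why a naive density argument fails for $s\notin Q$ just spell out what the paper leaves implicit.
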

%
%
%

\subsection{General peacocks} Kellerer  \cite{Ke72} considers the more general case of a peacock $(\mu_t)_{t\in T}$ where $(T,<)$ is an abstract total order and $s < t$ implies $\mu_s \preceq  \mu_t$, moreover no continuity assumptions on $t\mapsto \mu_t$ are imposed. Notably the existence of a martingale associated to such a general peacock already follows from the case treated in the previous section since every peacock can be embedded in a (right-) continuous peacock indexed by real numbers: 
\begin{lemma}\label{NotRight} Let $(T, <)$ be a total order and 
$(\mu_t)_{t\in T}$ a peacock. Then there exist a  peacock $(\nu_s)_{s\in \R^+}$ which is continuous (in the sense that $s\mapsto \nu_s$ is weakly continuous) and an increasing function $f:T\to \R_+$ such that $$\mu_t=\nu_{f(t)}.$$ If $T$ has a maximal element we may assume that $f:T\to [0,1]$.
\end{lemma}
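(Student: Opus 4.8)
The plan is to build the continuous peacock $(\nu_s)_{s\in\R^+}$ in two conceptual moves: first re-index $T$ by an order-embedding into $\R^+$ with enough room between consecutive atoms to interpolate, and then interpolate linearly (in the sense of convex order) between the marginals.

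First I would construct the increasing function $f\colon T\to\R^+$. Since $(\mu_t)$ is a peacock, the map $t\mapsto \int\phi\,d\mu_t$ is nondecreasing for any strictly convex $\phi$ of linear growth, say $\phi(x)=\sqrt{1+x^2}$; call this quantity $m(t)$. Because the $\mu_t$ have first moments, $m$ is real-valued. Choose a countable set $T_0\subseteq T$ that is order-dense in the sense that it contains every point at which $m$ jumps, together with a cofinal and coinitial sequence if such exist; enumerate $T_0=\{t_1,t_2,\dots\}$ and define $f$ on $T$ by a weighted sum $f(t)=\sum_{t_k< t} 2^{-k} + (\text{continuous part of }m \text{ up to }t)$, suitably normalized. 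The point is to get a strictly increasing $f$ whose image leaves a genuine interval $(f(t^-),f(t))$ of positive length wherever $\mu_{t^-}\neq\mu_t$, while $f$ is ``continuous across'' points where $\mu$ is already weakly continuous. If $T$ has a maximal element $t^*$ one rescales so that $f(T)\subseteq[0,1]$ with $f(t^*)=1$; this is where the final sentence of the statement is used.

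Next I would define $\nu_s$ for $s\in\R^+$. On the closure of $f(T)$ set $\nu_{f(t)}:=\mu_t$ (well-defined by monotonicity of $f$, and consistent at limit points using Lemma~\ref{lem:backmart}-type right/left limit arguments together with the fact that jumps of $m$ were put into $T_0$). On each complementary open interval $(a,b)$, with left endpoint carrying mass $\mu_a$ (a weak limit $\mu_{t^-}$) and right endpoint carrying $\mu_b=\mu_t$ and $\mu_a\preceq\mu_b$, interpolate: a convenient choice is $\nu_s := \law\big( S_{\tau(s)}\big)$ for a Root-type or simple time-changed martingale between $\mu_a$ and $\mu_b$, but the cleanest is to use the standard fact that if $\mu_a\preceq\mu_b$ then $r\mapsto$ (the marginal at time $r$ of \emph{any} martingale interpolation) is a continuous peacock; even simpler, one can take the ``stretched'' family obtained by pushing $\mu_a$ toward $\mu_b$ via the one-dimensional martingale coupling and reading off intermediate laws, or invoke that $\{\rho : \mu_a\preceq\rho\preceq\mu_b\}$ is weakly path-connected by a continuous peacock (which itself follows from the finite case plus Theorem~\ref{NiceCompact}). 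Outside $f(T)$ on the far right (if $T$ has no maximum and $f$ is bounded, or beyond $\sup f(T)$) set $\nu_s$ constant equal to the last marginal. One then checks $s\mapsto\int\phi\,d\nu_s$ is continuous and nondecreasing, which by the criterion recalled before Lemma~\ref{lem:backmart} is exactly weak continuity of $s\mapsto\nu_s$ plus the peacock property.

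The main obstacle I anticipate is the interpolation step done \emph{uniformly} across all the (possibly uncountably many, but really only countably many nontrivial) gaps, together with checking global continuity of $s\mapsto\int\phi\,d\nu_s$ at the endpoints where an interpolated block meets the set $f(T)$ — one must ensure the interpolation is chosen so that $\int\phi\,d\nu_s\to\int\phi\,d\mu_a$ as $s\downarrow a$ and $\to\int\phi\,d\mu_b$ as $s\uparrow b$, and that no new jumps are introduced at accumulation points of gaps. This is handled by normalizing each interpolation to have controlled oscillation (e.g.\ total $\phi$-increment equal to $\int\phi\,d\mu_b-\int\phi\,d\mu_a$ spread continuously over $(a,b)$) and by the earlier observation that jumps of the original $m$ were isolated into the countable set $T_0$, so only countably many gaps are nontrivial and their $\phi$-increments are summable; a standard $\eps/2^k$ argument then gives continuity everywhere.
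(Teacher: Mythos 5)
Your overall strategy (re-index $T$ by an increasing real function, then interpolate across the gaps of the image) is the same as the paper's, but two steps fail as written. First, the definition of $f$ does not deliver the property you need. With $f(t)=\sum_{t_k<t}2^{-k}+(\text{continuous part of }m)(t)$ the extra weight $2^{-k}$ is only felt strictly \emph{after} $t_k$, so it opens a gap to the right of $f(t_k)$, not to the left; even well-definedness can break (for $T=\{1,2\}$, $\mu_1\neq\mu_2$, the only jump point is $t=2$ and the strict sum gives $f(1)=f(2)$ while $\mu_1\neq\mu_2$). More seriously, at a point $t$ that is a limit from the left in $T$ and at which $m$ has a left jump, one gets $f(r)\to f(t)$ as $r\uparrow t$ while $\mu_r\not\to\mu_t$, so $s\mapsto\nu_s$ is already discontinuous on $\overline{f(T)}$ and no interpolation on the complementary intervals can repair this. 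These defects are fixable (e.g.\ summing over $t_k\le t$), but the clean fix makes your whole first paragraph unnecessary: the paper simply takes $f(t)=\int\phi\,d\mu_t$. Strict convexity of $\phi$ makes $\nu_{f(t)}:=\mu_t$ well defined; on the image the $\phi$-moment of $\nu_s$ equals $s$, so continuity on the closure (obtained via tightness, not via Lemma \ref{lem:backmart} -- you have no martingale at this stage, so that appeal is circular in spirit) is automatic; gaps of the image occur exactly at the discontinuities of the peacock; and on the countably many complementary intervals one interpolates \emph{linearly}, $\nu_s=\tfrac{b-s}{b-a}\,\nu_a+\tfrac{s-a}{b-a}\,\nu_b$, which is simpler than any of your three proposals (note also that marginal laws along a general martingale are only c\`adl\`ag, not continuous) and removes the $\eps/2^k$ bookkeeping of your last paragraph.

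Second, your treatment of the right end when $T$ has no maximal element is wrong. You propose to set $\nu_s$ equal to ``the last marginal'' beyond $\sup f(T)$, but if $T$ has no maximum there need be no last marginal, and in fact no measure dominating all $\mu_t$ in convex order need exist: take $\mu_k=(1-\tfrac{1}{b_k})\delta_0+\tfrac{1}{2b_k}(\delta_{-b_k}+\delta_{b_k})$ with $b_k\uparrow\infty$; this is a peacock with bounded first moments (so your $f$ is bounded), yet any $\rho$ with $\mu_k\preceq\rho$ for all $k$ would have to satisfy $\int(x-K)^+\,d\rho\ge\tfrac12$ for every $K$, which is impossible for an integrable $\rho$. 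So the construction cannot be ``capped off'' by a constant marginal; one must instead either stretch the half-open image increasingly onto all of $\R^+$, or, as the paper does, pick an increasing sequence $(t_n)$ with $\sup_n\int\phi\,d\mu_{t_n}=\sup_{t\in T}\int\phi\,d\mu_t$ and apply the maximal-element argument to the initial segments $\{s\in T: s\le t_n\}$, gluing the pieces. This is exactly why the lemma only promises $f:T\to[0,1]$ when a maximal element exists.
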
 
\begin{proof} Assume first that $T$ contains a maximal element $t^*$. Consider again $\phi(x)=\sqrt{1+x^2}$ and set $f(t):=\int \phi\, d \mu_t$ for $t\in T$. On the image $S$ of $f$ we define $(\nu_s)$ through $\nu_{f(t)}:= \mu_t$. Then $s\mapsto \nu_s$ is continuous on $I$ and $s^*:=f(t^*)$ is a maximal element of $S$. 

Using tightness of $(\nu_s)_{s\in S}$ we obtain that 
 $\nu_s:=\lim_{r\in S, r\to s}$  exists for $s\in \overline S$. It remains to extend $(\nu_s)_{s\in \overline S}$ to $[0,s]$. The set $[0,s]\setminus S$ is the union of countably many intervals and on each of these we can define $\nu_s$ by linear interpolation. Finally it is of course possible to replace $[0,s]$ by $[0,1]$ through rescaling.

If $T$ does not have a maximal element, we first pick an increasing sequence $(t_n)_{n\geq 1}$ in $T$ such that $\sup_n \int \phi\, d \mu_{t_n}= \sup_{t\in T}\int \phi\, d \mu_t$, then we apply the previous argument to the initial segments $\{ s\in T:s\leq t_n\}$.
\end{proof}

Above we have seen that $\M((\mu_t)_{t\in [0,1]})\neq \emptyset$ for  $(\mu_t)_{t\in [0,1]}$ right-continuous and pasting countably many martingales together this extends to the case of a right-continuous peacock $(\nu_s)_{s\in \R_+}$. By Lemma \ref{NotRight} this already implies $\M((\mu_t)_{t\in T})\neq \emptyset$ for a peackock wrt to a general total order $T$.


\section{Root to Markov}\label{RootSec}

So far we have constructed martingales which are not necessarily Markov. To obtain the existence of a Markov-martingale with desired marginals, one might try  to adapt the previous argument by restricting the sets $\M_Q((\mu_t)_{t\in D})$ to the set of Markov-martingales. As noted above, this strategy does not work in a completely straight forward way as  being  \emph{Markovian}  is not a closed property wrt\ weak convergence.

\begin{example}
The sequence $\mu_n = \frac{1}{2}(\delta_{(1,\frac{1}{n},1)}+\delta_{(-1,-\frac{1}{n},-1)})$
of Markov-measures weakly converge to the non-Markovian measure 
$\mu=\frac{1}{2}(\delta_{(1,0,1)}+\delta_{(-1,0,-1)})$.
\end{example}
%
%
%
%

\subsection{Lipschitz-Markov kernels}\label{RootSec}

A solution $\tau$ to the two marginal Skorokhod problem $B_0\sim \mu, B_\tau\sim \nu$  gives rise to the particular martingale transport plan $(B_0, B_\tau)$. Sometimes these martingale couplings induced by solutions to the Skorokhod embedding problem exhibit certain desirable properties. In particular we shall be interested in the Root solution to the Skorokhod problem.   
\begin{theorem}[Root \cite{Ro69}]\label{thm:root} 
Let $\mu \preceq \nu$ be two probability measures on $\mathbb{R}$. There
exists a closed set (``barrier'') $\mathcal{R}\subset \R_+\times \R$ (i.e.\ $(s,x)\in \mathcal R, s< t$ implies that $(t,x)\in \mathcal R$)  such that for Brownian motion $(B_t)_{t \geq 0}$ started in
$B_0 \sim \mu$ the hitting time $\tau_R$ of $\mathcal{R}$ embeds $\nu$ in the sense that $B_{\tau_R} \sim \nu$ and $(B_{t\wedge \tau_R})_t$ is uniformly integrable.
\end{theorem}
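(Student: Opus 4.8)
The plan is to realise the Root barrier as the coincidence set of a parabolic obstacle problem for the heat equation, phrased through potential functions. For a probability measure $\rho$ with first moment write $U_\rho(x):=-\int_\R|x-y|\,d\rho(y)$; this function is concave and $1$-Lipschitz, satisfies $U_\rho''=-2\rho$ in the distributional sense, obeys $U_\rho(x)=-|x|+\int y\,d\rho(y)+o(1)$ as $x\to\pm\infty$, and determines $\rho$. Moreover $\mu\preceq\nu$ is equivalent to $U_\nu\le U_\mu$ pointwise together with $(U_\mu-U_\nu)(x)\to0$ as $|x|\to\infty$, i.e.\ equality of barycentres. I would set these facts down first, and reduce to compactly supported $\mu,\nu$ by truncation, recovering the general first-moment case by a limiting argument at the very end.

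The heart of the matter is the function $u=u(t,x)$ solving
\[\min\!\Big(u-U_\nu,\ \partial_t u-\tfrac12\partial_{xx}u\Big)=0,\qquad u(0,\cdot)=U_\mu,\]
the smallest supersolution of the heat equation that lies above the obstacle $U_\nu$ and has the prescribed initial datum; heuristically $u(t,\cdot)$ is the potential of the law of Brownian motion started from $\mu$ and frozen the instant further diffusion would push its potential below $U_\nu$. Standard parabolic theory provides existence, uniqueness and enough regularity (continuity of $u$, a spatial $1$-Lipschitz bound inherited from $U_\mu,U_\nu$, and $u\in W^{2,1}_{p,\mathrm{loc}}$). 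Since the time-independent function $(t,x)\mapsto U_\mu(x)$ is itself a supersolution dominating $U_\nu$, comparison yields $u(t,\cdot)\le U_\mu$; inserting $u(h,\cdot)\le U_\mu=u(0,\cdot)$ back into the comparison principle shows $t\mapsto u(t,x)$ is non-increasing. I then take
\[\mathcal R:=\{(t,x): u(t,x)=U_\nu(x)\},\]
which is closed by continuity, and the monotonicity of $u$ in $t$ — once $u$ equals $U_\nu$ it cannot decrease further, hence stays equal — is exactly the barrier property $(s,x)\in\mathcal R,\ t>s\Rightarrow(t,x)\in\mathcal R$.

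It remains to identify the hitting time $\tau_{\mathcal R}$ of $\mathcal R$ with the freezing rule built into $u$. Applying the It\^o--Tanaka formula to $u(t,B_{t\wedge\tau_{\mathcal R}})$ and using $\partial_t u=\tfrac12\partial_{xx}u$ off $\mathcal R$ shows that this stopped process is a local martingale; the linear growth of $u$ upgrades it to a uniformly integrable martingale, which gives the uniform integrability of $(B_{t\wedge\tau_{\mathcal R}})_t$ claimed in the statement. Letting $t\to\infty$ and using recurrence of one-dimensional Brownian motion identifies $U_{\law(B_{\tau_{\mathcal R}})}$ with $u_\infty:=\lim_{t\to\infty}u(t,\cdot)$; the limiting complementarity relation forces $u_\infty$ to be concave and affine on $\{u_\infty>U_\nu\}$, and comparing behaviour at $\pm\infty$ — here $\mu\preceq\nu$ enters decisively — together with concavity of $U_\nu$ excludes any such affine piece, so $u_\infty=U_\nu$ and hence $B_{\tau_{\mathcal R}}\sim\nu$, as a measure is determined by its potential.

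I expect the real difficulty to be the barrier property of $\mathcal R$: the monotonicity argument above is short but relies on having set up the obstacle problem with enough regularity for a clean comparison principle, and the alternative — Root's original route, working in a compact space of barriers, proving continuity of the barrier-to-embedded-law map and then a Perron/maximality argument to hit $\nu$ — trades PDE regularity for delicate approximation of barriers and is no easier. The second technical point is the reduction to marginals with only a first moment: both the uniform integrability of $(B_{t\wedge\tau_{\mathcal R}})$ and the convergence $u(t,\cdot)\to U_\nu$ must be obtained without an $L^2$ bound, which one does by noting that convergence of potential functions amounts to $W_1$-type convergence and hence to uniform integrability.
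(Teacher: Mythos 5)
The paper offers no proof of this statement: it is quoted directly from Root \cite{Ro69} (with the uniform integrability part as in Loynes' refinement), so there is no internal argument to compare against. What you propose is essentially the later PDE route to Root's barrier --- Dupire's obstacle-problem heuristic, made rigorous by Cox--Wang and Gassiat--Oberhauser--dos Reis --- rather than Root's original construction via a compact metric space of barriers and a continuity/approximation argument. That route is legitimate in principle, and your preliminaries are sound: the potential-function facts, the definition $\mathcal R=\{(t,x):u(t,x)=U_\nu(x)\}$, its closedness, and the monotonicity-in-$t$ argument giving the barrier property are all correct as sketched.

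The genuine gap is the identification of $u(t,\cdot)$ with the potential of $\mathrm{Law}(B_{t\wedge\tau_{\mathcal R}})$, on which everything downstream (uniform integrability, $u_\infty=U_{\mathrm{Law}(B_{\tau_{\mathcal R}})}$, hence $B_{\tau_{\mathcal R}}\sim\nu$) depends, and your argument for it fails. The process $t\mapsto u(t,B_{t\wedge\tau_{\mathcal R}})$ is \emph{not} a local martingale: It\^o gives drift $\bigl(\partial_t u+\tfrac12\partial_{xx}u\bigr)\,dt$, and off the contact set the obstacle problem gives $\partial_t u=\tfrac12\partial_{xx}u$, so the drift is $\partial_{xx}u\,dt=2\,\partial_t u\,dt\le 0$ --- a supermartingale, not a martingale. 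The forward heat equation governing potentials of the marginal laws is not the backward equation $\partial_t u+\tfrac12\partial_{xx}u=0$ that would make such a composition driftless, so this step proves neither the claimed martingale property nor, a fortiori, that $u(t,\cdot)=-\E|B_{t\wedge\tau_{\mathcal R}}-\cdot|$. (The uniform integrability claim is also unsupported: linear growth of $u$ does not upgrade a local martingale to a UI one.) The standard repair, and the real work in the PDE proofs, is to show directly that $v(t,x):=-\E\,|B_{t\wedge\tau_{\mathcal R}}-x|$ is itself a solution of the same obstacle problem --- this uses Tanaka's formula together with the barrier geometry to see that local time at $x$ stops accruing exactly on the contact set --- and then to invoke a uniqueness/comparison theorem for that problem to conclude $u=v$; uniform integrability then follows from the resulting uniform bound $U_{\mathrm{Law}(B_{t\wedge\tau_{\mathcal R}})}\ge U_\nu$ (Chacon's lemma / de la Vall\'ee Poussin), not from growth of $u$. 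Until that identification is supplied, the conclusion $B_{\tau_{\mathcal R}}\sim\nu$ does not follow from your sketch.
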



Before we formally introduce the Lipschitz-Markov property we recall that the $L^1$- Wasserstein distance between two probabilities $\alpha, \beta$ on $\R$  is given by 
$$\textstyle W(\alpha,\beta)=\inf\Big\{\int |x-y| \, d\gamma: \gamma\in \Pi(\alpha, \beta)\Big\}=
\sup\Big\{\int f\, d\nu-\int f\, d\mu: f\in \mbox{Lip}_1\Big\},$$
where $\Pi(\alpha,\beta)$ denotes the set of all couplings between $\alpha$ and $\beta$ and $\mbox{Lip}_1$ denotes the set of all $1$- Lipschitz functions  $\R\to \R.$ 
The equality of the two terms is a consequence of the Monge-Kantorovich duality in optimal transport, see e.g.\ \cite[Section 5]{Vi09}. 

A martingale coupling $\pi\in\M(\mu,\nu)$ is \emph{Lipschitz-Markov} iff for some (and then any) disintegration $(\pi_x)_x$ of $\pi$ wrt\ $\mu$  and some set $X\subseteq \R$, $\mu(X)=1$ we have for  $x,x'\in X$ 
\begin{align}\label{def:LM} \Wa(\pi_x,\pi_{x'})= |x-x'|.\end{align}
We note that the inequality $\Wa(\pi_x,\pi_{x'})\geq  |x-x'|$ is satisfied for arbitrary $\pi\in\M(\mu,\nu)$: 
 for typical  $x,x', x< x'$, the mean of $\pi_x$ equals $x$ and the mean of $\pi_{x'}$ equals $x'$. We thus find for arbitrary $\gamma\in \Pi(\pi_x,\pi_{x'})$
\begin{align}\label{ShiftCost}\textstyle
\int |y-y'|\, d\gamma(y,y')\geq &\, \textstyle \big|\int y\, d\gamma(y,y')-\int y'\, d\gamma(y,y')\big|\\
= &\, \textstyle \big|\int y\, d\pi_x(y)-\int y'\, d\pi_{x'}(y')\big|=|x-x'|, \nonumber
\end{align}
hence $W(\pi_x,\pi_{x'})\geq |x-x'|$. 

Note also  that  $W(\pi_x,\pi_{x'})= |x-x'|$ holds iff the inequality in \eqref{ShiftCost} is an equality for the minimizing coupling $\gamma^*$. This holds  true iff there is a transport plan $\gamma$  which is \emph{isotone} in the sense that it transports $\pi_x$-almost all points $y$ to some $y'\geq y$. This is of course equivalent  to saying that $\pi_x$ precedes  $\pi_{x'}$ in first order stochastic dominance. 

\begin{lemma}\label{lem:RootLM}
The Root coupling $\pi_R= \mbox{Law}(B_0,B_{\tau_R})$ is Lipschitz-Markov.
\end{lemma}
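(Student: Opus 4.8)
The plan is to use the characterization established just above the statement: it suffices to show that for $\mu$-almost every pair $x < x'$, the conditional law $(\pi_R)_x$ precedes $(\pi_R)_{x'}$ in first-order stochastic dominance, equivalently that there is an isotone (monotone) coupling between them. Here $(\pi_R)_x$ is the law of $B_{\tau_R}$ given $B_0 = x$, where $B$ is Brownian motion and $\tau_R$ is the hitting time of the Root barrier $\mathcal{R}$. The key structural fact I would exploit is that the Root barrier $\mathcal{R}$ is a single fixed closed set with the barrier property $(s,x)\in\mathcal{R}, s<t \Rightarrow (t,x)\in\mathcal{R}$, and — crucially — it does not depend on the starting point. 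So I want to realize the two hitting processes on a common probability space in a way that preserves order.

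First I would set up a coupling of the two Brownian motions started at $x$ and $x'$: take a single Brownian motion $(W_t)_{t\ge 0}$ with $W_0 = 0$ and set $B_t = x + W_t$, $B'_t = x' + W_t$, so that $B'_t - B_t \equiv x' - x > 0$ for all $t$ (a pure translation coupling). Let $\tau = \inf\{t : (t, B_t)\in\mathcal{R}\}$ and $\tau' = \inf\{t : (t, B'_t)\in\mathcal{R}\}$. The aim is to show $B_\tau \le B'_{\tau'}$ almost surely, which exhibits an isotone coupling of $(\pi_R)_x$ and $(\pi_R)_{x'}$ and hence gives $W((\pi_R)_x,(\pi_R)_{x'}) = |x-x'|$. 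The main point to check is the following: at the moment $\tau$ when the lower path $B$ hits the barrier, either the upper path $B'$ has already been stopped at some earlier time at a point $\ge B_{\tau}$... hmm, wait — that ordering needs care. Let me restructure: I would argue that $\tau' \le \tau$ is false in general, so instead I track the spatial values directly. Consider the first time $t$ at which $B_t$ and $B'_t$ "cross" in the sense relevant to the barrier. Since $B'_t = B_t + (x'-x)$, the upper path is always strictly to the right. A barrier point $(t,y)\in\mathcal{R}$ forces, by the vertical-ray (time-monotonicity) property, that $(s,y)\in\mathcal{R}$ for all $s\ge t$; but this says nothing about horizontal neighbors. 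So the genuine content is a statement about Root barriers specifically, and I expect to need: the Root solution depends monotonically on the initial condition in the stochastic order.

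The honest route, and the one I would actually carry out: invoke the known structural description of the Root barrier via its relation to the obstacle PDE / the function $u(t,x) = \mathbb{E}|B_{t\wedge\tau_R} - \,\cdot\,|$ type potentials, or more elementarily, use that for the translation coupling above the stopped value $B_\tau$ is a nondecreasing function of the starting point $x$ because the barrier is closed, has connected vertical fibers, and the path $t\mapsto (t,B_t)$ and $t\mapsto(t,B_t + c)$ for $c>0$ hit such a region in a spatially-ordered way. Concretely: let $c = x'-x>0$. On the event $\{\tau < \tau'\}$ the lower path hits first at $(\tau, B_\tau)$; I claim $B'_{\tau'}\ge B_\tau$ because up to time $\tau$ the upper path avoided $\mathcal R$, and after $\tau$ it is a Brownian path started at $B_\tau + c > B_\tau$ which must still hit the closed barrier, and by the barrier's time-monotone vertical structure combined with continuity of paths, its eventual hitting value cannot drop below $B_\tau$ — any downward excursion below level $B_\tau$ would have to cross a region already "filled in" by the barrier at those later times. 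Symmetrically on $\{\tau'\le\tau\}$. I would make this rigorous by a careful first-passage decomposition, handling the subtle case where the barrier fiber over some level is empty (the path may never be stopped at that level, which is fine).

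The step I expect to be the main obstacle is precisely this monotonicity argument — turning "the barrier is a time-monotone closed set and the two paths differ by a positive constant" into "$B_\tau \le B'_{\tau'}$ a.s." — because naively the two stopped paths are driven by the same noise but stopped at different random times, and the inequality $\tau\le\tau'$ need not hold pointwise. The resolution is to compare spatial values rather than times, using the vertical (in time) connectedness of $\mathcal R$ and path-continuity to rule out the upper path being stopped strictly below the lower path's stopping value; I would also need to confirm that the isotone coupling so produced is compatible with $\gamma\in\Pi((\pi_R)_x,(\pi_R)_{x'})$ for $\mu\otimes\mu$-a.e.\ $(x,x')$, which follows from Fubini once the pathwise inequality is in hand. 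Everything else — the reduction to first-order stochastic dominance, the identity $W((\pi_R)_x,(\pi_R)_{x'}) = |x-x'|$, and the definition~\eqref{def:LM} — is already laid out in the discussion preceding the lemma.
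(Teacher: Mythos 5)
Your plan coincides with the paper's proof: the paper also uses the translation (common-noise) coupling, running $x+\omega$ and $x'+\omega$ for a single Brownian path $\omega$ started at $0$, asserts the pathwise ordering $B_{\tau_R(x+\omega)}(x+\omega)\leq B_{\tau_R(x'+\omega)}(x'+\omega)$ from the geometric (vertical-ray) property of the Root barrier, and concludes via the isotone coupling and the discussion preceding the lemma that \eqref{def:LM} holds. Your crossing/intermediate-value argument for the key inequality (in both cases $\tau\le\tau'$ and $\tau'\le\tau$) is a correct way to make precise the step the paper dispatches with a one-line appeal to the barrier's geometry.
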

\begin{proof} Write $(B_t)_t$ for the canonical process on $\Omega=C[0,\infty)$, $\W$ for Wiener measure started in $\mu$ and $\tau_R$ for the Root stopping time s.t.\ $(B_0, B_{\tau_R})\sim_\W \pi_R\in \M(\mu, \nu)$. 

It follows from the 
 geometric properties of the barrier $\mathcal R$ that for all $x<x'$ and $\omega \in \Omega$ such that $\omega(0)=0$ 
$$ B_{\tau_R(x+\omega)}(x+\omega)\leq B_{\tau_R(x'+\omega)}(x'+\omega).$$
Write $\pi_x$ for the distribution of $B_{\tau_R}$ given $B_0=x$ and $\W_0$ for Wiener measure with start in $0$. Then $(\pi_x)_x$ defines a disintegration (wrt\ the first coordinate) of $\pi_R$ and  for $x<x'$ an isotone coupling $\gamma\in\Pi(\pi_x,\pi_{x'})$ can be explicitly defined by
$$ \qquad\gamma(A\times B):= \textstyle \int {1}_{A\times B}(B_{\tau_R(x+\omega)}(x+\omega),B_{\tau_R(x'+\omega)}(x'+\omega))\, \W_0(d\omega).  \qquad\qedhere $$
\end{proof}

\begin{center}
\begin{tikzpicture}[scale=1.5]
\draw[->] (0,-1) -- (0,2.5) node[left] {$B_t$};
\draw [smooth,variable=\y,domain=-0.7:2.1536] plot ( {3.8-\y-4*\y*\y+6*\y*\y*\y-2*\y*\y*\y*\y},{\y});
\fill[pattern=horizontal lines, inner sep=2pt] (0.03,-0.8) -- (0.03,-0.7125) -- plot [domain=-0.7:2.1536] ( {3.83-\x-4*\x*\x+6*\x*\x*\x-2*\x*\x*\x*\x},{\x}) -- (0.03,2.2) -- (3.9,2.2) -- (3.9,-0.8) -- cycle;
\draw[->] (0,-0.1) -- (4.3,-0.1) node[right] {$t$};
\pgfmathsetseed{612}
\draw (0,1.41) node [left] {$x$}
\foreach \x in {1,...,281} { -- ++(0.01,-rand*0.1) } -- ++(0.01,-0.04) node[right] {};
\pgfmathsetseed{612}
\draw (0,0.8) node [left] {$x'$}
\foreach \x in {1,...,272} { -- ++(0.01,-rand*0.1) } node[right] {};
\node at (1.3,0.3) {$\omega$};
\node at (1.43,1.4) {$\omega$};
\end{tikzpicture}
\end{center}

\begin{remark} We thank David Hobson for pointing out that 
 Lemma \ref{lem:RootLM} remains true if we replace $\tau_R$ by  Hobson's solution to the Skorokhod problem \cite{Ho98b}.\footnote{Hobson's solution \cite{Ho98b} can be seen as an extension of the Azema-Yor embedding to the case of a general starting distribution.}
 
 We also note that this property is not common among martingale couplings. It is not present  e.g.\ in the coupling corresponding to the Rost-embedding nor  the various  extremal martingale couplings recently introduced by  Hobson--Neuberger \cite{HoNe12}, Hobson--Klimmek \cite{HoKl12},  Juillet (and one of the present authors) \cite{BeJu12},   and Henry-Labordere--Touzi \cite{HeTo13}.
\end{remark}

\subsection{Compactness of Lipschitz-Markov martingales}

To generalize the Lipschitz-Markov property to multiple time steps we first provide an equivalent formulation in the two step case. Using the Lipschitz-function characterization of the Wasserstein distance we find that \eqref{def:LM} is tantamount to the following: 
for every $f\in \mbox{Lip}_1(\R)$ the mapping
\begin{align}\label{Lip2}
\textstyle x\mapsto \int f\, d\pi_x = \E[f(S_2)| S_1=x]
\end{align}
is $1$-Lipschitz (on a set of full $\mu$-measure). 

\smallskip

Let $Q\subset [0,1]$ be  a  set which is at most countable. In accordance with \eqref{Lip2} we call a measure/coupling $\P$ on $\R^Q$  \emph{Lipschitz-Markov} if for any $s,t \in Q, s < t$ and 
$f \in \mbox{Lip}_1(\R)$ there exists
$g \in \mbox{Lip}_1(\R)$ such that 
\begin{align}\label{eq:LM}
\mathbb{E}_\P[f(S_{t})| \F_s] = g(S_s).
\end{align}

The Lipschitz-Markov property is closed in the desired sense: 
\begin{lemma}\label{lem:lipclosed}
A martingale measure $\P$ on $\R^Q$ is Lipschitz-Markov iff \begin{align}
\textstyle
\E_\P[X f(S_{t})]\,\E_\P[Y]
-\E_\P[X]\, \E_\P[Y f(S_{t})]\leq \int X(\omega)Y(\bar\omega)|\omega_{s}-\bar\omega_{s}|\, d(\P\otimes \P)
\label{eq:Lq}
\end{align}
for all $f \in \mbox{Lip}_1(\R)$, $s < t \in Q$ and $X,Y$ non-negative, bounded, and $\F_s$-measurable. 
\end{lemma}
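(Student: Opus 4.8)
The plan is to prove both implications by unwinding the definition of the conditional expectation $\E_\P[f(S_t)\mid\F_s]$ in terms of test functions, and to recognize the right-hand side of \eqref{eq:Lq} as the Wasserstein-type cost of the coupling of the two conditional laws induced by $X$ and $Y$. First I would fix $f\in\mbox{Lip}_1(\R)$ and $s<t$ in $Q$. For the ``only if'' direction, assume $\P$ is Lipschitz-Markov, so $\E_\P[f(S_t)\mid\F_s]=g(S_s)$ with $g\in\mbox{Lip}_1(\R)$. Then for nonnegative bounded $\F_s$-measurable $X,Y$, the left-hand side of \eqref{eq:Lq} becomes $\E_\P[Xg(S_s)]\,\E_\P[Y]-\E_\P[X]\,\E_\P[Yg(S_s)]$, which is exactly $\int X(\omega)Y(\bar\omega)\bigl(g(\omega_s)-g(\bar\omega_s)\bigr)\,d(\P\otimes\P)$. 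Since $g$ is $1$-Lipschitz, $g(\omega_s)-g(\bar\omega_s)\leq|\omega_s-\bar\omega_s|$ and $X,Y\geq 0$, so the estimate \eqref{eq:Lq} follows immediately.

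For the converse, assume \eqref{eq:Lq} holds for all such $f,X,Y$. The key point is that \eqref{eq:Lq}, rewritten using the factorization of the $\F_s$-measurable quantities through $S_s$, says precisely that for the (regular) conditional law of $S_t$ given $\F_s$, call it $\pi_{S_s}$, the map $x\mapsto h_f(x):=\E_\P[f(S_t)\mid S_s=x]$ satisfies, for $\mu_s\otimes\mu_s$-a.e.\ pair, an inequality of the form
\begin{align*}
\bigl(h_f(x)-h_f(x')\bigr)\cdot\mathrm{something\ nonnegative}\leq |x-x'|\cdot\mathrm{same\ nonnegative\ thing},
\end{align*}
and by choosing $X,Y$ to be (approximations of) indicators of small balls around points $x,x'$ in the support of $\mu_s$, this forces $|h_f(x)-h_f(x')|\leq|x-x'|$ off a $\mu_s$-null set. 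More carefully: the inequality \eqref{eq:Lq} is linear and symmetric enough that, applying it with the roles of $X$ and $Y$ exchanged and adding, one obtains $|\E_\P[Xf(S_t)]\E_\P[Y]-\E_\P[X]\E_\P[Yf(S_t)]|\leq\int X(\omega)Y(\bar\omega)|\omega_s-\bar\omega_s|\,d(\P\otimes\P)$; then one passes to the regular conditional distribution and a density-point / Lebesgue differentiation argument on the real line pins down a $1$-Lipschitz version $g$ of $h_f$ on a set of full $\mu_s$-measure. Running this over a countable dense family of $f\in\mbox{Lip}_1$ (it suffices to check the Lipschitz property of $x\mapsto\E[f(S_t)\mid S_s=x]$ for such a dense family, e.g.\ $f$ ranging over $y\mapsto|y-q|$, $q\in\Q$, plus the identity, which determine the conditional law up to first-order stochastic dominance) yields a single $\mu_s$-full set on which \eqref{eq:LM} holds with a $1$-Lipschitz $g$ for every $f\in\mbox{Lip}_1$, hence $\P$ is Lipschitz-Markov.

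The main obstacle I anticipate is the measurability/regularity bookkeeping in the converse direction: producing, from the integrated inequality \eqref{eq:Lq} against all $\F_s$-measurable $X,Y$, a genuinely $1$-Lipschitz \emph{pointwise} version of $x\mapsto\E_\P[f(S_t)\mid S_s=x]$ defined on a common full-measure set simultaneously for all $f$ in a countable determining family. This requires a careful selection of the null sets, a Lebesgue-differentiation argument localized to the support of $\mu_s$, and the observation that $\F_s$-measurable functions can be approximated (in a way compatible with the bilinear form in \eqref{eq:Lq}) by functions of $S_s$ — using that $(S_r)_{r\le s}$ is a martingale so that $S_s$ generates $\F_s$ up to the relevant conditional expectations. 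Everything else is a direct computation, and the ``only if'' direction is essentially immediate.
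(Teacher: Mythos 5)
Your ``only if'' direction matches the paper's argument and is fine. The converse, however, has a genuine gap: you work throughout with $h_f(x)=\E_\P[f(S_t)\mid S_s=x]$ and with test functions $X,Y$ that are (approximations of) functions of $S_s$ alone, i.e.\ you silently replace conditioning on $\F_s$ by conditioning on $\sigma(S_s)$. But \eqref{eq:LM} requires $\E_\P[f(S_t)\mid\F_s]=g(S_s)$ with $\F_s=\sigma(S_r:\ r\in Q,\ r\le s)$, so the Markov-type factorization through $S_s$ is part of what must be proved, not an input; showing only that $x\mapsto\E_\P[f(S_t)\mid S_s=x]$ is $1$-Lipschitz establishes a property of the two-marginal coupling $(S_s,S_t)$ and is strictly weaker. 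Your attempted justification --- that ``$(S_r)_{r\le s}$ is a martingale so that $S_s$ generates $\F_s$ up to the relevant conditional expectations'' --- is false: a martingale need not be Markov, and the martingale property gives no license to replace $\F_s$-measurable $X,Y$ by functions of $S_s$ in \eqref{eq:Lq}.

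The point of allowing arbitrary nonnegative bounded $\F_s$-measurable $X,Y$ is precisely to obtain this factorization. The paper's route: let $\psi$ be a $\sigma((S_q)_{q\in Q\cap[0,s]})$-measurable version of $\E_\P[f(S_t)\mid\F_s]$; by the tower property the left side of \eqref{eq:Lq} equals $\int(\psi(\omega)-\psi(\bar\omega))X(\omega)Y(\bar\omega)\,d(\P\otimes\P)$, and since products $X(\omega)Y(\bar\omega)$ determine the sign of the finite signed measure $\big(\psi(\omega)-\psi(\bar\omega)-|\omega_s-\bar\omega_s|\big)\,d(\P\otimes\P)$ on $\F_s\otimes\F_s$, one gets $\psi(\omega)-\psi(\bar\omega)\le|\omega_s-\bar\omega_s|$ for $\P\otimes\P$-a.e.\ $(\omega,\bar\omega)$. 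This single inequality (together with its symmetrized form, obtained by swapping $\omega$ and $\bar\omega$) simultaneously forces $\psi$ to depend a.s.\ only on the coordinate $\omega_s$ and to be $1$-Lipschitz in that coordinate on a set of full $\mu_s$-measure, after which a standard Lipschitz extension yields $g$ on all of $\R$. Once you replace your $h_f$ by this $\psi$, the remaining machinery you invoke (Lebesgue differentiation, indicators of small balls, countable dense families of $f$) is unnecessary: the conclusion for each fixed $f\in\mbox{Lip}_1$ is obtained directly, with no pointwise-regularization argument.
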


\begin{proof}
If $\P$ is Lipschitz-Markov, then for a given 1-Lipschitz function $f$ we can find by definition of a Lipschitz-Markov measure/coupling a 1-Lipschitz function $g$ satisfying \eqref{eq:LM}. Moreover, as $g \in \mbox{Lip}_1$ we have for non-negative, bounded $X,Y$
\[ (g(\omega_{s})-g(\bar\omega_{s}))X(\omega)Y(\bar\omega)\leq |\omega_{s}-\bar\omega_{s}|X(\omega)Y(\bar\omega).\]
Integration with respect to $\P \otimes \P$ and an application of \eqref{eq:LM} yields \eqref{eq:Lq}.

For the reverse implication, by basic properties of conditional expectation there is a $\sigma((S_q)_{q \in Q \cap [0,s]})$-measurable function $\psi$ such that $\P$-a.s.\
\[\psi(\omega) = \mathbb{E}_\P[f(S_{t})| \F_s](\omega) .\]
Now from \eqref{eq:Lq} we almost surely have
$ \psi(\omega)-\psi(\bar\omega) \leq |\omega_{s}-\bar\omega_{s}|$
which shows that $\psi$ only depends on the $s$ coordinate and is in fact 1-Lipschitz.
\end{proof}

For $D\subseteq Q$ we set
\[\L_Q((\mu_t)_{t\in D}):=\{ \P \in \M_Q : \P \mbox{ is Lipschitz-Markov,   $S_t\sim_\P \mu_t$ for $t\in D$} \}.\]

\begin{theorem}\label{lem:markcomp} Let $Q \subseteq [0,1], Q\ni 1$ be countable.
 For every finite $ 1 \in D\subseteq Q$ the set $\L_Q((\mu_t)_{t\in D})$ is non-empty and compact. 
In particular, $\L((\mu_t)_{t \in Q}) := \L_Q((\mu_t)_{t\in Q})$ is non-empty and compact.
\end{theorem}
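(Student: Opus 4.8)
The strategy mirrors the proof of Lemma~\ref{lem:Mcompact}, replacing $\M_Q$ by $\L_Q$ throughout, so the two points to verify are (i) non-emptiness of $\L_Q((\mu_t)_{t\in D})$ for finite $D$ and (ii) compactness, after which the finite intersection property delivers the conclusion for $D=Q$.

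For compactness, the key input is Lemma~\ref{lem:lipclosed}: the Lipschitz-Markov property is cut out of $\M_Q$ by the family of inequalities \eqref{eq:Lq}, indexed by $f\in\mathrm{Lip}_1(\R)$, rationals $s<t$ in $Q$, and non-negative bounded $\F_s$-measurable $X,Y$. Each such inequality is closed under the weak topology induced by bounded continuous cylinder functions: for fixed $f,s,t$ and fixed bounded continuous $\F_s$-measurable $X,Y$ depending on finitely many coordinates, both sides of \eqref{eq:Lq} are continuous functionals of $\P$ (the right side involves the bounded continuous function $(\omega,\bar\omega)\mapsto X(\omega)Y(\bar\omega)|\omega_s-\bar\omega_s|$ against $\P\otimes\P$, and $\P\mapsto\P\otimes\P$ is weakly continuous). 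A routine approximation argument shows it suffices to test \eqref{eq:Lq} against such cylinder $X,Y$, and one may further restrict $f$ to a countable dense subset of $\mathrm{Lip}_1(\R)$ (in the topology of uniform convergence on compacts) since both sides are continuous in $f$ along such approximations. Hence $\L_Q((\mu_t)_{t\in D})$ is a closed subset of the compact set $\M_Q((\mu_t)_{t\in D})$ from Lemma~\ref{lem:Mcompact}, thus compact; intersecting over finite $D$ using the finite intersection property gives that $\L_Q((\mu_t)_{t\in Q})$ is non-empty and compact, and this coincides with $\L((\mu_t)_{t\in Q})$.

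For non-emptiness when $D=\{1=t_0>t_1>\dots>t_n\}$ (relabelling in increasing order $t_0<t_1<\dots<t_n$ with $t_n=1$), the idea is to build a Lipschitz-Markov martingale measure by composing two-marginal Lipschitz-Markov kernels. By Lemma~\ref{lem:RootLM}, for each consecutive pair $\mu_{t_{k}}\preceq\mu_{t_{k+1}}$ the Root coupling provides a martingale coupling $\pi^{(k)}\in\M(\mu_{t_k},\mu_{t_{k+1}})$ that is Lipschitz-Markov in the two-marginal sense \eqref{def:LM}, equivalently \eqref{Lip2}. One then defines $\P$ on $\R^{D}$ as the Markov composition (gluing) of the kernels $(\pi^{(k)}_x)_x$ along the chain; this is automatically a martingale and Markov, and one must check it satisfies \eqref{eq:LM} for \emph{all} pairs $s<t$ in $D$, not merely consecutive ones. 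This follows because a composition of $1$-Lipschitz conditional-expectation operators is again $1$-Lipschitz: if $\E_\P[f(S_{t_{k+1}})\mid S_{t_k}=x]=g_{k}(x)$ with $g_k\in\mathrm{Lip}_1$, then $\E_\P[f(S_{t_j})\mid S_{t_i}=x]=g_i\circ\cdots\circ g_{j-1}(x)$ is $1$-Lipschitz. Finally, extend $\P$ from $\R^D$ to a measure on $\R^Q$ concentrated on paths that are constant between consecutive points of $D$ (or, more cleanly, note we only need membership in $\L_Q((\mu_t)_{t\in D})$, which only constrains the $D$-marginals, so any $\F$-Markov, Lipschitz-Markov extension — e.g.\ by further Root compositions over a finite refinement, then the countable case — works); the cleanest route is to invoke that $\L_Q((\mu_t)_{t\in D})$ asks only for the marginals on $D$, so it suffices to exhibit one element of $\M_Q$ that is Lipschitz-Markov with the right $D$-marginals, and the composed Root kernels on all of $Q$ (gluing over every consecutive pair in $Q$, using right-continuity to pass to the countable index set as in Lemma~\ref{lem:Mcompact}) provide it.

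The main obstacle is the non-emptiness step, specifically making the passage from a composition of finitely many two-marginal Root kernels to a genuine element of $\M_Q$ fully rigorous: one needs the glued measure to be a martingale with respect to the \emph{whole} filtration $\F$ on $\R^Q$ and to retain the Lipschitz-Markov property \eqref{eq:LM} across all pairs $s<t\in Q$, which requires the composition-of-Lipschitz-maps observation together with care that the Root kernels can be chosen measurably in the starting point. The compactness step, by contrast, is essentially bookkeeping once Lemma~\ref{lem:lipclosed} is in hand, modulo the standard reduction to countably many test inequalities.
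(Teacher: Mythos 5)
Your argument is correct and essentially identical to the paper's (very terse) proof: non-emptiness for finite $D$ by composing two-marginal Lipschitz-Markov (Root) kernels as in Lemma~\ref{lem:RootLM}, compactness because Lemma~\ref{lem:lipclosed} exhibits the Lipschitz-Markov property as a condition closed under weak convergence inside the compact set $\M_Q((\mu_t)_{t\in D})$ of Lemma~\ref{lem:Mcompact}, and then the finite intersection property for $D=Q$. One small notational slip: $\E_\P[f(S_{t_j})\mid S_{t_i}=x]$ is obtained by iterating the conditional-expectation \emph{operators} (each of which preserves $\mathrm{Lip}_1$, as you correctly state), not by the function composition $g_i\circ\cdots\circ g_{j-1}$ of the particular functions $g_k$ defined from the single $f$; this does not affect the argument.
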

\begin{proof}
For finite $D\subseteq Q$ it is plain that $\L_Q((\mu_t)_{t\in D})$ is non-empty: this follows by composing of Lipschitz-Markov-kernels. Hence, by compactness, $\L_Q((\mu_t)_{t \in Q}) = \bigcap_{D \subseteq Q, |D| < \infty} \L_Q((\mu_t)_{t\in D})\neq \emptyset$. \end{proof}
 A martingale on $\mathcal{D}$ is \emph{Lipschitz-Markov} if
\eqref{eq:LM} holds for $s<t\in[0,1] $  wrt\ $\F^+$.
\begin{theorem}\label{OurKellerer}
Assume that $(\mu_t)_{t\in [0,1]}$ is a right-continuous peacock and let $Q\ni 1$ be countable and dense in $[0,1]$. 
If $\P\in \L((\mu_t)_{t\in Q})$, then the corresponding (cf.\eqref{QtoI}) martingale measure  $\widetilde \P\in \M((\mu_t)_{t\in [0,1]})$ is Lipschitz-Markov. 

In particular, the set of all Lipschitz-Markov martingales with marginals $(\mu_t)_{t\in [0,1]}$ is compact and non-empty.
\end{theorem}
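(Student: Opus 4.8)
The plan is to transfer the Lipschitz--Markov property from the countable skeleton $Q$ to all of $[0,1]$ by a limiting argument resting on the path regularity of Proposition \ref{OnDQ} and on backward martingale convergence (Lemma \ref{lem:backmart}), and then to read off compactness and non-emptiness from Theorem \ref{lem:markcomp} via the bijection \eqref{QtoI}. As a first reduction, I would observe that Lemma \ref{lem:lipclosed} and its proof carry over verbatim to martingale measures on $\mathcal D$ equipped with $\F^+$: such a $\widetilde\P$ satisfies \eqref{eq:LM} for all $s<t\in[0,1]$ w.r.t.\ $\F^+$ if and only if \eqref{eq:Lq} holds for all $f\in\mathrm{Lip}_1(\R)$, all $s<t\in[0,1]$, and all non-negative bounded $\F^+_s$-measurable $X,Y$ --- the only inputs being that $\E_{\widetilde\P}[f(S_t)\mid\F^+_s]$ is $\F^+_s$-measurable and that an $\F^+_s$-measurable $\psi$ with $\psi(\omega)-\psi(\bar\omega)\le|S_s(\omega)-S_s(\bar\omega)|$ $(\widetilde\P\otimes\widetilde\P)$-a.s.\ must equal $g(S_s)$ for some $g\in\mathrm{Lip}_1(\R)$ (after a McShane extension). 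So it suffices to verify \eqref{eq:Lq} for $\widetilde\P$.

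For the core step I would fix $f\in\mathrm{Lip}_1(\R)$, $s<t\in[0,1]$ and non-negative bounded $\F^+_s$-measurable $X,Y$, and, identifying $\D_Q$ with a subset of full $\widetilde\P$-measure of both $\R^Q$ and $\mathcal D$, regard $X,Y$ as $\bar\F_s$-measurable. I would then pick $q_n,r_n\in Q$ with $s<q_n\downarrow s$, $r_n\downarrow t$ and $q_n<r_n$. Since $\bar\F_s\subseteq\F_{q_n}$ and $\P\in\L((\mu_t)_{t\in Q})$ satisfies \eqref{eq:Lq} (Lemma \ref{lem:lipclosed}) for the times $q_n<r_n$, one gets for every $n$
\begin{align*}
\E_{\widetilde\P}[Xf(S_{r_n})]\,\E_{\widetilde\P}[Y]-\E_{\widetilde\P}[X]\,\E_{\widetilde\P}[Yf(S_{r_n})]\le\int X(\omega)\,Y(\bar\omega)\,|S_{q_n}(\omega)-S_{q_n}(\bar\omega)|\;d(\widetilde\P\otimes\widetilde\P).
\end{align*}
Letting $n\to\infty$: by Proposition \ref{OnDQ} the paths are $\widetilde\P$-a.s.\ right-continuous, so $S_{q_n}\to S_s$ and $S_{r_n}\to S_t$ a.s., and $(S_{q_n})_n$, $(S_{r_n})_n$ are backward martingales along the decreasing filtrations $(\F_{q_n})_n$, $(\F_{r_n})_n$, hence converge in $L^1$ and form uniformly integrable families. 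Thus $f(S_{r_n})\to f(S_t)$ in $L^1$ (uniform integrability since $|f(S_{r_n})|\le|f(0)|+|S_{r_n}|$), which handles the left-hand side; on the right-hand side the integrand is dominated by the uniformly integrable family $\|X\|_\infty\|Y\|_\infty(|S_{q_n}(\omega)|+|S_{q_n}(\bar\omega)|)$ and converges a.s., so Vitali's theorem gives convergence to $\int X(\omega)Y(\bar\omega)|S_s(\omega)-S_s(\bar\omega)|\,d(\widetilde\P\otimes\widetilde\P)$. Passing to the limit yields \eqref{eq:Lq} for $\widetilde\P$, hence $\widetilde\P$ is Lipschitz--Markov.

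For the ``in particular'' clause I would use that \eqref{QtoI} is a homeomorphism of $\M((\mu_t)_{t\in Q})$ onto $\M((\mu_t)_{t\in[0,1]})$ (Proposition \ref{Q2I} and the definition of the target topology), whose restriction sends $\L((\mu_t)_{t\in Q})$ \emph{onto} the set of Lipschitz--Markov martingales on $\mathcal D$ with marginals $(\mu_t)_{t\in[0,1]}$: ``$\subseteq$'' is the first part of the theorem, and ``$\supseteq$'' holds because restricting a Lipschitz--Markov martingale on $\mathcal D$ to the times of $Q$ gives a Lipschitz--Markov martingale on $\R^Q$ (for $s<t\in Q$ one conditions \eqref{eq:LM} further onto $\F_s\subseteq\F^+_s$, and $g(S_s)$ is already $\F_s$-measurable). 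As $\L((\mu_t)_{t\in Q})$ is non-empty and compact by Theorem \ref{lem:markcomp}, so is its continuous image.

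The hard part will be the double limit $q_n\downarrow s$, $r_n\downarrow t$: one must pass \emph{both} sides of \eqref{eq:Lq} across the limit, and this is exactly where the structural inputs enter --- Proposition \ref{OnDQ} provides a.s.\ right-continuity (so the pointwise limits exist and it is legitimate to argue on $\mathcal D$), and Lemma \ref{lem:backmart} together with Doob's maximal/upcrossing inequalities upgrades these to $L^1$-convergence and supplies the uniform integrability needed to move the expectations and the product integral across the limit. A minor bookkeeping point is admissibility of the $\F^+_s$-measurable test functions at the approximating times, which is ensured by $\bar\F_s\subseteq\F_{q_n}$ for $q_n>s$.
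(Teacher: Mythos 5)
Your proposal is correct and follows essentially the same route as the paper: reduce to the $\F^+_s$-version of the criterion \eqref{eq:Lq} (the paper likewise notes that the proof of Lemma \ref{lem:lipclosed} carries over), apply \eqref{eq:Lq} along times $q_n\downarrow s$, $r_n\downarrow t$ in $Q$ using $\F^+_s\subseteq\F_{q_n}$, and pass to the limit. You merely spell out what the paper compresses into ``letting $n\to\infty$ concludes the proof'' (right-continuity from Proposition \ref{OnDQ} plus uniform integrability/Vitali) and make explicit the image argument behind the ``in particular'' clause, both of which are consistent with the paper's intent.
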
 
\begin{proof}
The arguments in the proof of Lemma \ref{lem:lipclosed} work in exactly the
same way to show that $\tilde \P$ being Lipschitz-Markov is equivalent to conditions similar to
\eqref{eq:Lq} where $X,Y$ are chosen to be measurable wrt\ $\F^+_{s}$
(or $\bar \F_s$, see the remark before Proposition \ref{Q2I}).

For arbitrary $s,t \in [0,1], s<t$ choose sequences $s_n\downarrow s, t_n \downarrow t$ in
$Q$. Note that $X,Y$ are in fact measurable wrt\ $\F_{s_n}$ and we thus have
\begin{align}\nonumber\textstyle
\E_\P[X f(S_{t_n})]\,\E_\P[Y]
-\E_\P[X]\, \E_\P[Y f(S_{t_n})]\leq \int X(\omega)Y(\bar\omega)|\omega_{s_n}-\bar\omega_{s_n}|\, d(\P\otimes \P)(\omega,\bar\omega)
\end{align} by Lemma \ref{lem:lipclosed}. Letting $n\to \infty$ concludes the proof.
\end{proof}

\subsection{Further comments}
It is plain  that a Lipschitz-Markov kernel also has the Feller-property and
 in particular a Lipschitz-Markov martingales  are strong Markov processes wrt\ $\F^+$ (see \cite[Remark 1.70]{Li10}).
As in the previous section, the right-continuity of $(\mu_t)_{t\in [0,1]}$ is not necessary to establish the existence of a Lipschitz-Markov martingale,   this follows from Lemma \ref{NotRight}.
We also remark that the arguments of Section \ref{CompactnessSection}  directly extend to the case of multidimensional peacocks, where the marginal distributions $\mu_t$ are probabilities on $\R^d$. However it remains open whether Theorem \ref{OurKellerer} extends to this multidimensional setup.



\bibliography{joint_biblio}{}
\bibliographystyle{plain}

\end{document}